\newtheorem{prop}{Proposition}
\newtheorem{lemma}{Lemma}
\newtheorem{definition}{Definition}
\newtheorem{theorem}{Theorem}
\newtheorem{remark}{Remark}
\newenvironment{customthm}[1]
{\innercustomthm}
{\endinnercustomthm}
\newcommand{\ignore}[1]{}
\newcommand{\R}{\mathbb{R}}
\newcommand{\N}{\mathbb{N}}
\newcommand{\E}{\mathbb{E}}
\newcommand{\X}{\mathcal{X}}
\newcommand{\Y}{\mathcal{Y}}
\newcommand{\W}{\mathcal{W}}
\newcommand{\Z}{\mathcal{Z}}
\newcommand{\I}{\mathcal{I}}
\newcommand{\on}[1]{\operatorname{#1}}
\newcommand{\norm}[1]{\left\lVert #1 \right\rVert}
\newcommand{\abs}[1]{\left\vert #1 \right\rvert}
\begin{document}
\bibliographystyle{amsalpha}


\author{Solesne Bourguin$^1$}
\address{$^1$Boston University, Department of Mathematics and Statistics,  111 Cummington Mall, Boston, MA 02215, USA}
\email{bourguin@math.bu.edu}
\author{Charles-Philippe Diez$^2$}
\address{$^2$CNRS, Universit\'e de Lille, Laboratoire Paul Painlev\'e,
  UMR 8524, F-59655 Villeneuve d'Ascq, France.}
\email{charles-philippe.diez@univ-lille.fr}
\author{Ciprian A. Tudor$^2$}
\email{ciprian.tudor@math.univ-lille.fr}

\title[Large correlated Wishart matrices with
  chaotic entries]{Limiting behavior of large correlated Wishart matrices with
    chaotic entries}
  
\thanks{S. Bourguin was supported in part by the Simons Foundation
  grant 635136. C. Tudor was supported in part by the Labex CEMPI (ANR-11-LABX-0007-01) and MATHAMSUD project SARC (19-MATH-06)}

\begin{abstract}
We study the fluctuations, as $d,n\to \infty$,  of the Wishart matrix
$\W_{n,d}= \frac{1}{d} \X_{n,d} \X_{n,d} ^ {T} $ associated to a
$n\times d$ random matrix $\X_{n,d}$ with non-Gaussian entries. We
analyze the limiting behavior in distribution of $\W _{n,d}$ in two
situations: when the entries of $\X_{n,d}$ are independent elements
of a Wiener chaos of arbitrary order and when the entries are
partially correlated and belong to the second Wiener chaos. In the
first case, we show that the (suitably normalized) Wishart matrix
converges in distribution to a Gaussian matrix while in the correlated
case, we obtain its convergence in law to a diagonal non-Gaussian
matrix. In both cases, we derive the rate of convergence in the
Wasserstein distance via Malliavin calculus and analysis on Wiener
space.
\end{abstract}

\subjclass[2010]{60B20, 60F05, 60H07, 60G22}
\keywords{Wishart matrix; multiple stochastic integrals; Malliavin
  calculus; Stein's method; Rosenblatt process; fractional Brownian
  motion; high-dimensional regime}

\maketitle


\section{Introduction}
\noindent Random matrix theory plays an important role in various
areas of applications, including statistical physics, engineering
sciences, signal processing or mathematical finance. The various tools
that can be used to study random matrices come from different branches
of mathematics, such as combinatorics, non-commutative algebra,
geometry, spectral analysis and, of course, probability and
statistics. We focus on a special type of random matrices, called {\it
  Wishart matrices}, which have been introduced in
\cite{wishart_generalised_1928}. Given a $n\times d$ random matrix
$\X_{n,d}= (X_{ij})_{1\leq i\leq n,\ 1\leq j\leq d}$ with real
entries, its associated Wishart matrix $\W_{n,d}= (W_{ij})_{1\leq i,j
  \leq n}$ is the symmetric $n\times n$ matrix $\W _{n,d}= \frac{1}{d}
\X _{n,d} \X_{n,d}^ {T}$ ($\X^{T} $ being the transpose  of the matrix $\X$). The class of  Wishart matrices constitutes a
special class of sample covariance matrices with applications in
multivariate analysis or statistical theory, see e.g., the surveys
\cite{bishop_introduction_2018,johnstone_high_2007,rasmussen_gaussian_2006}. The
limiting behavior of this type of random matrices, as $d$ goes to infinity and $n$ is fixed (which is
referred to as the classical or finite dimensional regime) or when both $n, d$ tend to
infinity (usually called the high dimensional regime), has been
studied by many authors. The starting point of this analysis is the
situation where the entries of the matrix $\X_{n,d}$ are i.i.d. and
$n$ is fixed. In this case, the Wishart matrix associated to
$\X_{n,d}$ converges almost surely, as $d\to \infty$, to the $n\times
n$ identity matrix $\I_{n}$ by the strong law of large numbers and the
renormalized Wishart matrix $\sqrt{d} (\W_{n,d}- \I_{n}) $ satisfies a
Central Limit Theorem (CLT in the sequel). Later, due to the
increasing need of handling large data sets, several authors
investigated the high dimensional regime, when the matrix size $n$
also goes to infinity. Different strategies have been considered in
this case. A classical approach is based on the study of the empirical
spectral distribution and of the  eigenvalues of $\W_{n,d}$. It is
well known that if $n,d\to \infty$ such that $n/d\to c\in (0,
\infty)$, then the empirical spectral distribution of the Wishart
matrix converges weakly to the so-called Marchenko-Pastur distribution
(see \cite{marcenko_distribution_1967}).   A more recent approach
consists in analyzing the distance in distribution (for example, under the total
variation distance or Wasserstein distance) between the renormalized
Wishart matrix $\sqrt{d} (\W_{n,d}- \I_{n}) $ and its limiting
distribution when $d$ and $n$ are large. This approach has been used
in, e.g.,
\cite{bubeck_testing_2016,bubeck_entropic_2018,jiang_approximation_2015,racz_smooth_2019,nourdin_asymptotic_2018}. It
has been discovered that the distance (in the Wasserstein or total
variation sense) between the distribution of the renormalized Wishart
matrix and its limiting distribution (when this limit is Gaussian,
which happens in all the cases except when the entries have a strong
enough correlation, see \cite{nourdin_asymptotic_2018}), as
$n,d\to\infty$,  is of order less than $n^3/d$.  In the
above references, several situations have been studied: the entries of
the initial matrix $\X _{n, d}$ are independent and Gaussian (see
\cite{bubeck_testing_2016,jiang_approximation_2015,racz_smooth_2019}),
the entries are independent and not necessarily Gaussian (they are
supposed to have a log-concave distribution in
\cite{bubeck_entropic_2018}) or the entries are Gaussian and partially
correlated (see \cite{nourdin_asymptotic_2018}).  While in most
references the proofs are based  on entropy or moments analysis, in
\cite{nourdin_asymptotic_2018} the authors use the recent
Stein-Malliavin calculus (see \cite{nourdin_normal_2012}).
\\~\\
Our purpose is to use the techniques of Malliavin calculus and
analysis on Wiener space in order to generalize the above results in
two directions. First, we start with an $n\times d$ matrix $\X _{n,
  d}$ whose entries are independent  (not necessarily identically
distributed) elements of Wiener chaoses of arbitrary order. That is, we assume that for every
$1 \leq i \leq n$ and for every $1 \leq j \leq d$, 
\begin{equation}
\label{xij1-intro}
X_{ij}= I_{q_i} (f_{ij}),
\end{equation}
with $f_{ij}\in \mathfrak{H}^{\odot q_{i}  }$, where $q_{i}\geq 1$ and
the maximum of the $q_{i}$'s is bounded by an integer number $N_{0}$  for
every $1 \leq i \leq n$. In \eqref{xij1-intro}, $I_{q}$ denotes the
multiple Wiener integral of order $q$ with respect to an isonormal process $W$. Assume that the entries  have the same second and fourth moments, i.e., for every $1 \leq i \leq n$ and $1 \leq j \leq d$,

\begin{equation*}
  \E \left(  X^ {2}_{ij}  \right)= q_{i}!
  \norm{f_{ij}}_{\mathfrak{H}^{\otimes q_i}}^2 =1 \quad \mbox {and} \quad
  \E \left(  X^ {4}_{ij}  \right) = m_{4}. 
\end{equation*}
In this
situation we obtain the convergence in law of the corresponding
renormalized Wishart matrix $\widetilde{\W}_{n, d}= (\widetilde{W}_{ij})_{1\leq i,j\leq n} $ with entries $\widetilde{W}_{ij}= \sqrt{d} W_{ij}$ for $1\leq i,j\leq n$  to the GOE (Gaussian Orthogonal Ensemble)
matrix $\Z _{n}$ given by \eqref{goe}. This is a symmetric random
matrix $\Z_{n}= (Z_{ij}) _{1\leq i,j\leq n}$ whose diagonal elements
follow the distribution $Z_{ii}\sim N(0, m_{4}-1)$ while the
non-diagonal entries are such that $Z_{ij}\sim N(0,1)$ if $1\leq i<j
\leq n$ and  $Z_{ij}=Z_{ji}$ if $1\leq i<j\leq n$,  the variables
$\{Z_{ij} \colon i\leq j\}$ being independent.

The study of Wishart matrices  based on an initial matrix $\X_{n,d}$
with independent elements in (potentially different) Wiener chaoses 
is motivated by the following facts.  As mentioned above, Wishart
matrices can be viewed as sample covariance matrices and the elements
of the matrix $\X_{n,d}$ can be interpreted as the data. In recent
years, the statistical inference based on observations belonging to
Wiener chaoses of arbitrary order has been intensively studied (see,
among others, \cite{chronopoulou_self-similarity_2011, clausel_asymptotic_2014,
  pipiras_long-range_2017, tudor_analysis_2013}.  Another motivation
is related to the concept of universality, which has been tremendously
studied for random matrices by many authors (see e.g. \cite{edelman_beyond_2016}
and the references therein). Loosely speaking, the notion of
universality implies to understand the behavior of random matrices
with entries from a general (non necessarily Gaussian) distribution
and to see if the behavior displayed by Gaussian matrices still holds in the general case. 

We actually show that, when $n,d\to
\infty$, the distance between the renormalized Wishart matrix $\widetilde{\W}_{n, d}= (\widetilde{W}_{ij})_{1\leq i,j\leq n} $ and the
GOE matrix is of order less that $n^3/d$. This generalizes
the results of \cite{bubeck_testing_2016,bubeck_entropic_2018,jiang_approximation_2015,racz_smooth_2019}. More precisely, we prove the following result.
\begin{theorem}
  \label{mainresult-independence}
Consider the renormalized  Wishart matrix $\widetilde{\W}_{n, d}$ with
entries given by \eqref{hatwij}. Then for every $n\geq 1$,
$\widetilde{\W}_{n, d}$  converges in distribution componentwise, as $d\to \infty$, to
the matrix $\Z_n$ given by \eqref{goe}. Moreover, there exists a positive constant $C$ such that for every $n,d\geq
1$, 
\begin{equation}\label{28i-10}
d_{W} (\widetilde{\W}_{n, d}, \Z_n ) \leq C\sqrt{ \frac{n ^ {3}}{d}},
\end{equation}
where $d_W$ denotes the Wasserstein distance defined in Section \ref{secdistances}.
\end{theorem}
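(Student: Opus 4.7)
The plan is to apply the multivariate Stein--Malliavin methodology for the Wasserstein distance between a random vector built from multiple Wiener integrals and a centered Gaussian vector. First, I would use the product formula for multiple stochastic integrals to expand each entry $\widetilde{W}_{ij}$ as a finite sum of multiple integrals. For $i \neq j$, the product formula gives
\begin{equation*}
X_{ik} X_{jk} = I_{q_i}(f_{ik}) I_{q_j}(f_{jk}) = \sum_{r=0}^{q_i \wedge q_j} r! \binom{q_i}{r}\binom{q_j}{r} I_{q_i+q_j-2r}(f_{ik} \otimes_r f_{jk}),
\end{equation*}
and for $i=j$, the centered square $X_{ik}^2 - 1$ admits an analogous decomposition into even-order chaoses. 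Consequently the vector of upper-triangular entries of $\widetilde{\W}_{n,d}$ is a random vector whose components live in a finite direct sum of Wiener chaoses of orders at most $2N_0$.

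Second, I would apply the multivariate Stein--Malliavin bound (in the form suited to vectors with multi-chaotic components) for the Wasserstein distance to the target Gaussian vector $\Z_n$. A direct computation using $\E(X_{ij}^2)=1$, $\E(X_{ij}^4)=m_4$, and the independence of the entries shows that the covariance of the vectorized $\widetilde{\W}_{n,d}$ coincides exactly with the covariance of $\Z_n$. Hence the Wasserstein bound reduces to controlling a sum of the form
\begin{equation*}
\sqrt{\sum_{i\leq j,\, k\leq l} \on{Var}\!\left( \left\langle D\widetilde{W}_{ij},\, -DL^{-1}\widetilde{W}_{kl}\right\rangle_{\mathfrak{H}}\right)},
\end{equation*}
where $D$ is the Malliavin derivative and $L^{-1}$ is the pseudo-inverse of the Ornstein--Uhlenbeck generator associated to the isonormal process $W$.

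Third, I would estimate each variance in this sum using the explicit action of $D$ on $I_{q}$ and the product formula. Two observations are decisive. On the one hand, for each pair of entries, the rescaling $d^{-1/2}$ applied to a sum of $d$ products $X_{ik}X_{jk}$ of independent chaos variables produces, after orthogonality of the chaoses of different orders, a variance of order $d^{-1}$ uniformly in $n$ and in the chaos orders $q_i \leq N_0$. On the other hand, by the independence of the rows of $\X_{n,d}$, the contractions $f_{ik} \otimes_r f_{jk}$ coupling distinct row indices vanish for $r \geq 1$, and as a result the inner product $\langle D\widetilde{W}_{ij}, -DL^{-1}\widetilde{W}_{kl}\rangle_{\mathfrak{H}}$ is deterministic (equal to its mean $C_{(ij)(kl)}$) whenever $\{i,j\} \cap \{k,l\} = \emptyset$. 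Only quadruples $(i,j,k,l)$ whose index sets intersect can contribute, and there are $O(n^3)$ such quadruples. Summing an $O(d^{-1})$ bound over $O(n^3)$ pairs and extracting the square root yields precisely the announced rate $\sqrt{n^3/d}$.

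The main obstacle is this sharp combinatorial count, which is what distinguishes the correct $n^3/d$ scaling from a naive $n^4/d$ one; it depends crucially on exploiting the independence hypothesis at the level of Malliavin derivatives, rather than only at the level of covariances. The variable chaos orders $q_i$ add a second layer of technicality, since the product formula expansion produces a variable number of terms of different orders: each of these terms must be controlled separately and uniformly in $q_i \leq N_0$, which is precisely where the boundedness assumption on the $q_i$'s is used. Once the contractions are shown to satisfy the right uniform estimates, the proof reduces to a bookkeeping of non-vanishing quadruples and routine Malliavin-calculus inequalities.
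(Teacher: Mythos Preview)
Your proposal is correct and follows essentially the same route as the paper: reduce to the half-vector via Lemma~\ref{ll1}, apply the multivariate Stein--Malliavin bound of Theorem~\ref{tt1}, exploit the \"Ust\"unel--Zakai criterion to make the cross terms $\langle D\widetilde W_{ij},-DL^{-1}\widetilde W_{ab}\rangle_{\mathfrak H}$ vanish when $\{i,j\}\cap\{a,b\}=\emptyset$, and count the $O(n^3)$ surviving quadruples, each contributing $O(d^{-1})$. The only point where the paper is more explicit than your sketch is the diagonal case $i=j$: there one must control $\langle D(X_{ik}^2-1),-DL^{-1}(X_{il}^2-1)\rangle_{\mathfrak H}$ for $k\neq l$, and the paper isolates this in a short sequence of ``strong independence'' lemmas (Lemmas~\ref{ll2}--\ref{ll4}) showing that such terms vanish and that the remaining $F_{ikk}$ are independent in $k$; your phrase ``contractions coupling distinct row indices vanish'' should be read to also cover distinct \emph{column} indices within the same row, which is where this machinery is actually needed.
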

Another direction of study is to start with a matrix $\X_{n,d}$ whose elements
are non-Gaussian and partially correlated. As pointed out in
e.g. \cite{bubeck_entropic_2018}, obtaining an approximation result
without the assumption of independence represents a natural question
which has been a subject of wide interest. We will assume that these
entries are elements of the second Wiener chaos, correlated on the
same row, with the correlation being given by the increments of the
Rosenblatt process (see Section \ref{sec4} for the definition and
basic properties of this stochastic process). More precisely, the
entries of the matrix   $\X_{n,d}= (X_{ij})_{1\leq i\leq n, 1\leq
  j\leq }$ are given by $X_{ij}= Z ^{H, i}_{j}- Z^{H, i} _{j-1}$,
where $Z^{H, i}$, $1\leq i\leq n$ are $n$-independent Rosenblatt
processes with the same Hurst parameter $H\in \left( \frac{1}{2},
  1\right)$.  The definition and basic properties of the Rosenblatt
process are recalled in Section \ref{sec4}. This stochastic process is
a non-Gaussian self-similar process with stationary increments and
long-memory. Due to these properties, it found several applications in
various areas (hydrology, finance, interned traffic analysis, and more).  For more details on the theoretical aspects  and practical applications of the Rosenblatt process, we refer to the  monographs \cite{pipiras_long-range_2017, tudor_analysis_2013}.

Note that the
correlation structure of the Rosenblatt process is the same as the one
of the fractional Brownian motion (fBm). In this sense, the
correlation on the rows of the matrix $\X_{n,d}$ considered in our
work is the same as in \cite{nourdin_asymptotic_2018} (where the
entries are increments of the fBm). Nevertheless, the non-Gaussian
character of the entries  brings  more complexity and  leads to a
different behavior  of the associated Wishart matrix. Actually, we
show that the renormalized Wishart matrix $\widetilde{\W}_{n,d} =(\widetilde{W}_{ij})_{1\leq i,j\leq n} $ with $\widetilde{W}_{ij}= c_{1,H}^{-1} d ^{1-H}W_{ij} $  (the constant $c_{1,H}$ is defined in  \eqref{c1h}) converges  to a diagonal
matrix whose diagonal entries are random variables distributed
according to the Rosenblatt distribution and we are also able to
quantify the distance  associated to this limit theorem. Our result
can be stated as follows.
\begin{theorem}
 \label{mainresult-correlated}
Let $\widetilde{\W}_{n, d}$ be the renormalized Wishart matrix
\eqref{twij} and let $\mathcal{R}^ {H}_{n}$ be the diagonal matrix with
entries given by \eqref{29i-1}. Then, for every $n\geq 1$, the random  matrix  $\widetilde{\W}_{n, d}$ converges componentwise in distribution, as $d\to \infty$, to the matrix $\mathcal{R}^ {H}_{n}$. Moreover, there exists a positive constant $C$ such that as  $n,d\geq
1$, 
\begin{equation*}
d_{W}\left( \widetilde{\W}_{n, d}, \mathcal{R}^ {H}_{n}\right) \leq
C \begin{cases} nd ^ {\frac{1}{2}-H} & \mbox{if } H \in\left( \frac{1}{2}, \frac{3}{4}\right)\\
n\sqrt{\log(d)} d ^ {-\frac{1}{4}} & \mbox {if } H=\frac{3}{4}\\
nd ^ {H-1} & \mbox{if } H\in \left( \frac{3}{4}, 1\right)
\end{cases},
\end{equation*}
where $d_W$ denotes the Wasserstein distance defined in Section \ref{secdistances}.
\end{theorem}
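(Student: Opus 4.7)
The plan is to exploit the chaos decomposition of the entries of $\widetilde{\W}_{n,d}$ together with an $L^2$-coupling of the limit on the same Wiener space, in order to reduce the Wasserstein bound to the computation of a few explicit kernel norms. Since the Rosenblatt process lives in the second Wiener chaos, each entry can be written as $X_{ij}=I_2(f^{(i)}_j)$ for an explicit symmetric kernel $f^{(i)}_j\in\mathfrak{H}^{\odot 2}$, and the kernels attached to distinct rows live on orthogonal subspaces of the underlying isonormal process because the $n$ Rosenblatt processes are independent (so that every contraction $f^{(i)}_k\otimes_r f^{(j)}_{k'}$ with $r\geq 1$ and $i\neq j$ vanishes). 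Applying the product formula
\[
I_2(f)I_2(g)=I_4(f\,\widetilde{\otimes}\,g)+4I_2(f\otimes_1 g)+2\langle f,g\rangle_{\mathfrak{H}^{\otimes 2}}
\]
to each product $X_{ik}X_{jk}$, the row independence forces every off-diagonal entry of $\widetilde{\W}_{n,d}$ to lie in the fourth chaos,
\[
\widetilde{W}_{ij}=c_{1,H}^{-1}d^{-H}\sum_{k=1}^d I_4\bigl(f^{(i)}_k\,\widetilde{\otimes}\,f^{(j)}_k\bigr),\qquad i\neq j,
\]
while each diagonal entry splits, after absorbing the deterministic term built into \eqref{twij}, into an explicit fourth-chaos piece $I_4(F_i)$ and a second-chaos piece $I_2(K_i)$ with $K_i=4c_{1,H}^{-1}d^{-H}\sum_{k=1}^d f^{(i)}_k\otimes_1 f^{(i)}_k$.

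I would then realize the target $\mathcal{R}^H_n$ on the same Wiener space by setting $R^{H,i}=I_2\bigl(L_H^{(i)}\bigr)$, where $L_H^{(i)}$ is the Rosenblatt-at-time-$1$ kernel built on the $i$-th isonormal subspace, which keeps the $R^{H,i}$ independent. Because any $1$-Lipschitz test function for the Frobenius norm is $1$-Lipschitz on the vectorized entries, the Wasserstein distance is controlled by
\[
d_W\bigl(\widetilde{\W}_{n,d},\mathcal{R}^H_n\bigr)^2\leq \E\bigl\|\widetilde{\W}_{n,d}-\mathcal{R}^H_n\bigr\|_F^{\,2}= 2\sum_{i<j}\E\widetilde{W}_{ij}^{\,2}+\sum_{i=1}^n\E\bigl(\widetilde{W}_{ii}-R^{H,i}\bigr)^2,
\]
and each summand on the right becomes a deterministic $\mathfrak{H}^{\otimes p}$-kernel norm via the isometry property of multiple Wiener--It\^o integrals.

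For the off-diagonal contribution, row independence and the isometry give
\[
\E\widetilde{W}_{ij}^{\,2}=c_{1,H}^{-2}d^{-2H}\sum_{k,k'=1}^d r_H(|k-k'|)^2,
\]
with $r_H$ the unit-increment covariance of the Rosenblatt process (which coincides with that of fBm of Hurst parameter $H$). Classical estimates give $\sum_{k,k'=1}^d r_H(|k-k'|)^2 = O(d)$, $O(d\log d)$, $O(d^{4H-2})$ according to whether $H<3/4$, $H=3/4$ or $H>3/4$; summing over the $\binom{n}{2}$ pairs and taking a square root then produces exactly the three announced rates, the prefactor $n$ arising from $\sqrt{n(n-1)/2}$. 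The analogous computation for the fourth-chaos diagonal piece $\|F_i\|_{\mathfrak{H}^{\otimes 4}}^2$ reduces to the same sum $\sum_{k,k'}r_H(|k-k'|)^2$, so it contributes rates of the same order but with only an extra $\sqrt{n}$ factor, which is dominated by the off-diagonal term.

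The main obstacle is the second-chaos discrepancy $\bigl\|K_i-L_H^{(i)}\bigr\|_{\mathfrak{H}^{\otimes 2}}^2$ on the diagonal, which is where the non-Gaussian nature of the limit truly enters. The kernel $K_i$ is essentially a Riemann-sum approximation of $L_H^{(i)}$, and obtaining the sharp rate requires careful bookkeeping of the singularity of $L_H^{(i)}$ along its diagonal and of the scaling of the discretization step, in the spirit of Taqqu's noncentral limit theorem as developed in \cite{pipiras_long-range_2017,tudor_analysis_2013}. This is where the phase transition at $H=3/4$ reappears and where the rates $d^{1/2-H}$, $\sqrt{\log d}\,d^{-1/4}$ and $d^{H-1}$ are recovered. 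Once this bound is in hand, combining it with the off-diagonal and fourth-chaos estimates and taking the square root yields the claimed Wasserstein inequality.
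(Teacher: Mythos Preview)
Your approach is essentially the same as the paper's: bound the Wasserstein distance by an $L^2$-coupling on the same Wiener space, decompose each entry via the product formula into second- and fourth-chaos pieces, control the off-diagonal terms through $\sum_{k,k'} r_H(|k-k'|)^2$, and handle the diagonal terms by estimating separately the fourth-chaos norm and the second-chaos discrepancy $\|K_i-L_H^{(i)}\|^2$. One point of attribution is inverted, however: in the paper (Proposition~\ref{pp3}) the second-chaos discrepancy is shown to be $O(d^{1-2H})$ uniformly in $H$, with no phase transition, and it is the \emph{fourth}-chaos piece $T_{4,d}$ (both diagonal and off-diagonal) whose $L^2$-norm produces the three regimes $d^{1-2H}$, $\log(d)\,d^{-1/2}$, $d^{2H-2}$; your sketch locates the transition in the second-chaos term instead. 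This does not affect the final bound, since both contributions are summed, but it changes where the delicate work lies: the kernel estimate $\|K_i-L_H^{(i)}\|^2\leq Cd^{1-2H}$ is the genuinely technical step (the paper carries it out explicitly via integral expansions of the Rosenblatt kernel), whereas the fourth-chaos bound follows directly from the correlation sum you already identified.
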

In the case of independent entries,  the proof of our main result is based on the Stein-Malliavin calculus and the characterization of independent random variables in Wiener chaos while when the entries of the initial matrix $\X_{n,d}$ are correlated, we use the properties of random variables in the second Wiener chaos and in particular the behavior of the increments of the Rosenblatt process. 
\\~\\
The paper is organized as follows. In Section 2, we recall several
facts related to the distance between the probability distributions of
random matrices and random vectors, as well as the basics of Wiener space analysis and Malliavin calculus. In Section 3, we analyze the fluctuations of the Wishart matrix constructed from a matrix with independent entries in an arbitrary Wiener chaos,  while in Section 4 we treat the situation where the elements of the starting matrix $\X_{n,d}$ are non-Gaussian and partially correlated.

\section{Preliminaries}
In this preliminary part, we recall  some facts related to the concept
of distance between the probability distributions of
random matrices and random vectors and we introduce the tools of the Malliavin calculus needed in the sequel.

\subsection{Distances between random matrices}
\label{secdistances}

We will use the Wasserstein distance between two random matrices
taking values in $\mathcal{M}_n(\mathbb{R})$, which denotes the space
of $n \times n$ real matrices. Given two
$\mathcal{M}_n(\mathbb{R})$-valued random matrices $\mathcal{X}$ and $\mathcal{Y}$, the Wasserstein
distance between them is given by 
\begin{equation*}
d_W \left(\mathcal{X}, \mathcal{Y}  \right) =
\sup_{\norm{g}_{\on{Lip}}\leq 1} \abs{\mathbb{E}\left( g(\mathcal{X})
  \right) -\mathbb{E}\left( g(\mathcal{Y}) \right)},
\end{equation*}
where the Lipschitz norm $\norm{\cdot}_{\on{Lip}}$ of $g \colon \mathcal{M}_{n}(\mathbb{R}) \to \mathbb{R} $ is defined by  
 
\begin{equation*}
\norm{g}_{\on{Lip}} = \sup_{A \neq B\in \mathcal{M}_{n}(\mathbb{R})}\frac{ \abs{ g(A)- g(B)} }{\norm{A-B}_{\on{HS}} },
\end{equation*}
with $\norm{\cdot}_{\on{HS}}$ denoting the Hilbert-Schmidt norm on $\mathcal{M}_{n}(\mathbb{R})$.
\\~\\
With this definition at hand, we recall the definition of the notion
of $\phi$-closeness between random matrices.
\begin{definition}
  \label{phicloseness}
For every $n \geq 1$, let $\left\{ \mathcal{A}_{n,d} \colon d\geq 1
\right\}$ and $\left\{ \mathcal{B}_{n,d} \colon d\geq 1 \right\}$ be
two families of $n \times n$ random matrices. Let $\phi \colon \N
\times \N \to \R_+$ be given. Then, $\mathcal{A}_{n,d}$ is said to be
$\phi$-close to $\mathcal{B}_{n,d}$ if
$d_W(\mathcal{A}_{n,d},\mathcal{B}_{n,d})$ converges to zero as $n,d
\to \infty$ and $\phi(n,d) \to 0$.
\end{definition}
\noindent We will also make use of the Wasserstein distance between random
vectors, defined analogously as in the matrix case. Namely, if $X, Y$
are two $n$-dimensional random vectors, then the Wasserstein distance
between them is defined to be
\begin{equation}
  \label{dw}
d_W \left(X, Y  \right) =
\sup_{\norm{g}_{\on{Lip}}\leq 1} \abs{\mathbb{E}\left( g(X)
  \right) -\mathbb{E}\left( g(Y) \right)},
\end{equation}
where the Lipschitz norm $\norm{\cdot}_{\on{Lip}}$ of $g \colon \R^n \to \mathbb{R} $ is defined by  
 
\begin{equation*}
\norm{g}_{\on{Lip}} = \sup_{x \neq y \in \R^n}\frac{ \abs{ g(x)- g(y)} }{\norm{x-y}_{\R^n} },
\end{equation*}
with $\norm{\cdot}_{\R^n}$ denoting the Euclidean norm on $\R^n$.
\\~\\
If $\mathcal{X} = \left( X_{ij} \right)_{1 \leq i,j \leq n}$ is an $n
\times n$ symmetric random matrix, we associate to it its
``half-vector'' defined to be the $n(n+1)/2$-dimensional random vector
\begin{equation}\label{half}
\mathcal{X}^{\on{half}} = \left( X_{11}, X_{12} \ldots, X_{1n},
  X_{22}, X_{23}, \ldots , X_{2n}, \ldots, X_{nn}\right).
\end{equation} 
It turns out that, in the case of two symmetric matrices, the
Wasserstein distance between said matrices can be bounded from above
by a constant multiple of the Wasserstein distance between their
associated half-vectors. More specifically, we have the following
lemma (see \cite[Lemma 2.2]{nourdin_asymptotic_2018}).
\begin{lemma}\label{ll1}
Let $\X, \Y$ be two symmetric random matrices with values in $\mathcal{M}_{n}(\mathbb{R}).$ Then
\begin{equation*}
d _{W}(\X, \Y) \leq \sqrt{2} d_{W} (\X^{\on{half}}, \Y^{\on{half}}),
\end{equation*}
where $\X^{\on{half}}, \Y^{\on{half}}$ are the associated half-vectors
defined in \eqref{half}.
\end{lemma}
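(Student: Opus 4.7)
The plan is to exploit the elementary but crucial numerical relation between the Hilbert–Schmidt norm of a symmetric matrix and the Euclidean norm of the associated half-vector, and then lift any Lipschitz test function on the vector side to a Lipschitz test function on the matrix side (or vice versa) with a controlled Lipschitz constant.

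First, I would record the pointwise comparison. For any symmetric matrix $A \in \mathcal{M}_n(\mathbb{R})$,
\begin{equation*}
\norm{A}_{\on{HS}}^2 \;=\; \sum_{i=1}^{n} A_{ii}^2 + 2 \sum_{1 \leq i < j \leq n} A_{ij}^2 \;\leq\; 2\left( \sum_{i=1}^{n} A_{ii}^2 + \sum_{1 \leq i < j \leq n} A_{ij}^2 \right) \;=\; 2\,\norm{A^{\on{half}}}_{\R^{n(n+1)/2}}^2 ,
\end{equation*}
so that $\norm{A}_{\on{HS}} \leq \sqrt{2}\,\norm{A^{\on{half}}}_{\R^{n(n+1)/2}}$ for every symmetric $A$. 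Let $M \colon \R^{n(n+1)/2} \to \mathcal{M}_n(\mathbb{R})$ denote the linear reconstruction map sending a half-vector to the unique symmetric matrix with those upper-triangular entries. The above inequality says precisely that $\norm{M(v) - M(w)}_{\on{HS}} \leq \sqrt{2}\,\norm{v-w}_{\R^{n(n+1)/2}}$ for all $v,w$.

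Next, given a test function $g \colon \mathcal{M}_n(\mathbb{R}) \to \R$ with $\norm{g}_{\on{Lip}} \leq 1$, I would define $\tilde{g} \colon \R^{n(n+1)/2} \to \R$ by $\tilde{g}(v) = g(M(v))$. Combining the Lipschitz property of $g$ with the pointwise comparison yields
\begin{equation*}
\abs{\tilde{g}(v) - \tilde{g}(w)} \;\leq\; \norm{M(v) - M(w)}_{\on{HS}} \;\leq\; \sqrt{2}\,\norm{v-w}_{\R^{n(n+1)/2}},
\end{equation*}
so $\norm{\tilde{g}}_{\on{Lip}} \leq \sqrt{2}$. Since $\X, \Y$ are symmetric, $g(\X) = \tilde{g}(\X^{\on{half}})$ and $g(\Y) = \tilde{g}(\Y^{\on{half}})$ almost surely, so
\begin{equation*}
\abs{\E(g(\X)) - \E(g(\Y))} \;=\; \abs{\E(\tilde{g}(\X^{\on{half}})) - \E(\tilde{g}(\Y^{\on{half}}))} \;\leq\; \sqrt{2}\, d_W(\X^{\on{half}}, \Y^{\on{half}}),
\end{equation*}
where the last inequality applies the definition \eqref{dw} to the $1$-Lipschitz function $\tilde{g}/\sqrt{2}$. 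Taking the supremum over all $g$ with $\norm{g}_{\on{Lip}} \leq 1$ yields the claim.

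There is no serious obstacle here; the only point requiring care is the factor-of-$2$ in the Hilbert–Schmidt norm coming from double counting of the off-diagonal entries, which is exactly the source of the $\sqrt{2}$ in the statement. Everything else is a direct transfer of a Lipschitz test function via the linear reconstruction map $M$.
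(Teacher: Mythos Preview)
Your argument is correct and is precisely the standard proof: the paper does not give its own proof of this lemma but cites \cite[Lemma 2.2]{nourdin_asymptotic_2018}, and the argument there is exactly the one you wrote, based on the inequality $\norm{A}_{\on{HS}}^2 \leq 2\norm{A^{\on{half}}}_{\R^{n(n+1)/2}}^2$ for symmetric $A$ followed by a Lipschitz transfer through the reconstruction map.
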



\subsection{Elements of Malliavin calculus}
\label{app}

We briefly describe the main tools from analysis on Wiener space that
we will need in this paper. For a complete treatment of this topic, we
refer the reader to the monographs \cite{nualart_malliavin_2006} or \cite{nourdin_normal_2012}.
\\~\\
Let $\mathfrak{H}$ be a real separable Hilbert space and $\left\{ W(h)
  \colon h \in \mathfrak{H} \right\}$ an isonormal Gaussian process
indexed by it, that is, a centered Gaussian family of random variables
such that $\E \left( W(h)W(g) \right) = \left\langle h,g
\right\rangle_{\mathfrak{H}}$. Denote by  $I_{n}$ the multiple Wiener (or Wiener-It\^o) stochastic
integral  of order $n \geq 0$ with respect to
$W$ (see \cite[Section 1.1.2]{nualart_malliavin_2006}). The mapping $I_{n}$ is actually an
isometry between the Hilbert space $\mathfrak{H}^{\odot n}$ (symmetric
tensor product) equipped with the scaled norm
$\frac{1}{\sqrt{n!}}\norm{\cdot}_{\mathfrak{H}^{\otimes n}}$ and the
Wiener chaos of order $n$, which is defined as the closed linear span
of the random variables $$\left\{ H_n(W(h)) \colon h \in \mathfrak{H},\
  \norm{h}_{\mathfrak{H}}=1 \right\},$$ where $H_{n}$ is the $n$-th Hermite
polynomial given by $H_{0}=1$ and for $n\geq 1$
\begin{equation*}
H_{n}(x)=\frac{(-1)^{n}}{n!} \exp \left( \frac{x^{2}}{2} \right)
\frac{d^{n}}{dx^{n}}\left( \exp \left( -\frac{x^{2}}{2}\right)
\right), \quad x\in \mathbb{R}.
\end{equation*}
Multiple Wiener integrals enjoy the following isometry property: for
any integers $m,n \geq 1$,
\begin{equation}
\label{iso}
\mathbb{E}\left(I_{n}(f) I_{m}(g) \right) = \mathds{1}_{\left\{ n=m \right\}} n! \langle \tilde{f},\tilde{g}\rangle _{\mathfrak{H}^{\otimes n}},
\end{equation}
where $\tilde{f} $ denotes the symmetrization of $f$ and we recall that $I_{n}(f) = I_{n} ( \tilde{f} )$.
\\~\\
Recall the multiplication formula satisfied by multiple Wiener
integrals: for any integers  $n,m \geq 1$, and any $f\in \mathfrak{H}^{\odot n}$ and
$g\in \mathfrak{H}^{\odot m}$, it holds that
\begin{equation}\label{prod}
I_n(f)I_m(g)= \sum _{r=0}^{n\wedge m} r! \binom{n}{r}\binom{m}{r} I_{m+n-2r}(f\otimes _r g),
\end{equation}
where the $r$-th contraction of $f$ and $g$ is defined by, for $0\leq r\leq m\wedge n$, 
\begin{equation}
  \label{contra}
f\otimes _r g = \sum_{i_1,\ldots , i_r =1}^{\infty} \left\langle f,
  e_{i_1}\otimes \cdots \otimes e_{i_r}
\right\rangle_{\mathfrak{H}^{\otimes r}} \otimes \left\langle g,
  e_{i_1}\otimes \cdots \otimes e_{i_r}
\right\rangle_{\mathfrak{H}^{\otimes r}},
\end{equation}
with $\left\{ e_i \colon i \geq 1 \right\}$ denoting a
complete orthonormal system in $\mathfrak{H}$. 
\\~\\
Recall that any square integrable random variable $F$ which is measurable with respect to the $\sigma$-algebra generated by $W$ can be expanded into an orthogonal sum of multiple Wiener integrals:
\begin{equation}
\label{sum1} F=\sum_{n=0}^\infty I_{n}(f_{n}),
\end{equation}
where $f_{n}\in \mathfrak{H}^{\odot n}$ are (uniquely determined)
symmetric functions and $I_{0}(f_{0})=\mathbb{E}\left(  F\right)$.
\\~\\
Let $L$ denote the Ornstein-Uhlenbeck operator, whose action on a
random variable $F$ with chaos decomposition \eqref{sum1} and such
that $\sum_{n=1}^{\infty} n^{2}n! \norm{f_n}^2_{\mathfrak{H}^{\otimes n}}<\infty$ is given by
\begin{equation*}
LF=-\sum_{n=1}^{\infty} nI_{n}(f_{n}).
\end{equation*}
For $p>1$ and $\alpha \in \mathbb{R}$ we introduce the Sobolev-Watanabe space $\mathbb{D}^{\alpha ,p }$  as the closure of
the set of polynomial random variables with respect to the norm
\begin{equation*}
\Vert F\Vert _{\alpha , p} =\Vert (I -L) ^{\frac{\alpha }{2}} F \Vert_{L^{p} (\Omega )},
\end{equation*}
where $I$ represents the identity operator. We denote by $D$  the Malliavin
derivative that acts on smooth random variables of the form $F=g(W(h_1),
\ldots , W(h_n))$, where $g$ is a smooth function with compact support
and $h_i \in \mathfrak{H}$, $1 \leq i \leq n$. Its action on such a random
variable $F$ is given by
\begin{equation*}
DF=\sum_{i=1}^{n}\frac{\partial g}{\partial x_{i}}(W(h_1), \ldots , W(h_n)) h_{i}.
\end{equation*}
The operator $D$ is closable and continuous from $\mathbb{D}^{\alpha , p} $ into $\mathbb{D} ^{\alpha -1, p} \left( \mathfrak{H}\right).$

\section{Random matrices with independent chaotic entries}
\noindent In this section, we consider random matrices $\X_{n,d} = \left( X_{ij}
\right)_{1 \leq i \leq n,\ 1 \leq j \leq d}$ with independent
entries belonging to arbitrary order Wiener chaoses associated with an
isonormal Gaussian process $W=\left\{ W(h) \colon h \in \mathfrak{H}
\right\}$ as introduced in Subsection \ref{app}. Moreover, we assume
that the elements on the same row of the matrix $\X_{n, d}$ belong to
the same Wiener chaos, while the order of the chaos may change from
one row to another. In other words, we assume that for every
$1 \leq i \leq n$ and for every $1 \leq j \leq d$,

\begin{equation}
\label{xij1}
X_{ij}= I_{q_i} (f_{ij}),
\end{equation}
with $f_{ij}\in \mathfrak{H}^{\odot q_{i}  }$, where  the integer numbers $q_{i}$ for
every $1 \leq i \leq n$  are all in the set $\{1,2,\ldots, N_{0}\}$
with $N_{0}\geq 1$ being an integer. Here and in the sequel, $I_{q}$ denotes the
multiple Wiener integral of order $q$ with respect to $W$ introduced in Subsection \ref{app}. 
\\~\\
We do not assume that the entries have the same probability distribution, only that they have the same second and fourth moments, i.e., for every $1 \leq i \leq n$ and $1 \leq j \leq d$,

\begin{equation}
  \label{m2m4}
  \E \left(  X^ {2}_{ij}  \right)= q_{i}!
  \norm{f_{ij}}_{\mathfrak{H}^{\otimes q_i}}^2 =1 \quad \mbox {and} \quad
  \E \left(  X^ {4}_{ij}  \right) = m_{4}. 
\end{equation}
Consider the centered Wishart matrix (which is what will be referred to
as Wishart matrix in the sequel)  $\W_{n, d}= \left( W_{ij}\right)_{1\leq i,j\leq n}$ defined by  
\begin{equation}
\label{wishart}
 \W_{n, d} = \frac{1}{d} \X_{n,d}\X_{n,d}^{T} - \I_{n},
\end{equation}
where $\I_{n}$ denotes the identity matrix of $\mathcal{M}_n(\R)$, and $\X^ {T}$
stands for the tranpose of the matrix $\X$. Note that the Wishart matrix $\W_{n,d}$ is a symmetric $n\times n$ matrix and its  entries  can be explicited as 
\begin{equation}
\label{wii}
W_{ii}= \frac{1}{d} \sum_{k=1}^ {d} \left( X_{ik}^ {2}-1 \right), \quad i=1,\ldots,n
\end{equation}
and
\begin{equation}
\label{wij}
W_{ij} = \frac{1}{d} \sum_{k=1}^ {d} X_{ik}X_{jk}, \quad 1\leq i \neq j\leq n.
\end{equation}
Note that the independence of the entries $X_{ij}$ and assumption \eqref{m2m4} yield, for any $1\leq i\leq n$,
\begin{equation*}
\E \left( W_{ii}^ {2} \right)  = \frac{1}{d^ {2} } \sum_{k=1}^ {d} E
\left(   \left( X_{ik}^ {2}-1\right) ^ {2} \right) = \frac{m_4-1}{d},
\end{equation*}
and for $1\leq i \neq j\leq n$,
\begin{equation*}
\E \left( W_{ij} ^ {2} \right) = \frac{1}{d^ {2}} \sum_{k=1} ^ {d} \E \left(   X_{ik} ^ {2}\right) \E \left(   X_{jk} ^ {2}\right)= \frac{1}{d}.
\end{equation*}
Based on this observation, we define the renormalized Wishart matrix $ \widetilde{\W} _{n, d} = (\widetilde{W}_{ij}) _{1\leq i,j\leq n} $ as 
\begin{equation}
  \label{hatwij}
  \widetilde{W}_{ij}= \sqrt{d} W_{ij}
\end{equation}
for all $1\leq i, j\leq n$.
\\~\\
Also, consider the GOE matrix $\Z_n = \left( Z_{ij} \right)_{1 \leq
  i,j \leq n}$ with entries given by
\begin{equation}
\label{goe}
\begin{cases} Z_{ii} \sim N(0, m_4-1) & \mbox{for } 1 \leq i \leq n\\
Z_{ij} \sim N(0,1) & \mbox{for } 1 \leq i < j \leq n\\
Z_{ij} = Z_{ji} &\mbox{for } 1 \leq j < i \leq n
\end{cases},
\end{equation}
where the entries $\left\{ Z_{ij} \colon i \leq j \right\}$ are independent.
\begin{remark}
Note that proving Theorem \ref{mainresult-independence} entails proving that the matrices
$\widetilde{\W} _{n, d}$ and $\Z_n$ are $\phi$-close for $\phi(n,d) =
\frac{n^3}{d}$ (as introduced in Definition \ref{phicloseness}).
\end{remark}
\noindent As pointed out in Subsection \ref{secdistances}, assessing the Wasserstein
distance between symmetric random matrices can be shifted to the
problem of estimating the Wasserstein distance between associated random
vectors (see Lemma \ref{ll1}). In our context, a helpful result in
this direction is \cite[Theorem 6.1.1]{nourdin_normal_2012}, which we restate here
for convenience. 
\begin{theorem}[Theorem 6.1.1 in \cite{nourdin_normal_2012}]
\label{tt1} 
Fix $m \geq 2$, and let  $F= (F_{1},\ldots, F_{m}) $ be a centered
$m$-dimensional random vector with $F_{i}\in \mathbb{D}^{1, 4}$ for
every $i=1,\ldots,m$. Let $C \in \mathcal{M}_m(\R)$ be a symmetric and
positive definite matrix, and let $Z\sim N_m(0, C)$. Then,
\begin{equation*}
d_{W}(F, Z) \leq  \norm{C^{-1}}_{\on{op}}
\norm{C}_{\on{op}}^{1/2} \sqrt{ \sum_{i, j=1}^{m} \E \left(\left(
      C_{ij} - \left\langle DF_{i}, -DL^{-1} F_{j} \right\rangle_{\mathfrak{H}}  \right) ^{2}\right)},
\end{equation*}
where $\norm{\cdot}_{\on{op}}$ denotes the operator norm on $\mathcal{M}_m(\R)$.
\end{theorem}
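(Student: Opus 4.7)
The plan is to combine multivariate Stein's method with Malliavin integration by parts, following the blueprint that has become standard in the Stein--Malliavin approach. Fix a 1-Lipschitz function $g \colon \R^m \to \R$. The first step is to introduce a smooth solution $U_g$ to the Stein-type PDE associated with $N_m(0,C)$, namely
\begin{equation*}
\langle C, \on{Hess} U_g(x) \rangle_{\on{HS}} - \langle x, \nabla U_g(x) \rangle_{\R^m} = g(x) - \E g(Z).
\end{equation*}
This $U_g$ is produced explicitly from the Ornstein--Uhlenbeck semigroup driven by $N_m(0,C)$ via the Mehler-type formula
\begin{equation*}
U_g(x) = \int_0^\infty \bigl( \E g(e^{-t} x + \sqrt{1-e^{-2t}}\, Z) - \E g(Z) \bigr)\, dt,
\end{equation*}
and a direct computation (differentiating under the integral, then integrating by parts against the density of $Z$) recovers the Stein equation. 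To keep derivatives meaningful, one first argues for $g \in C^2$ with bounded derivatives and at the end extends to Lipschitz $g$ by mollification, using that the Wasserstein distance is characterized by 1-Lipschitz test functions.

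Next I would establish the key analytic bound on the Hessian of $U_g$. Differentiating the Mehler representation twice and rewriting the second derivative using the Gaussian integration by parts against $N_m(0, (1-e^{-2t})C)$, one gets
\begin{equation*}
\on{Hess} U_g(x) = -\int_0^\infty \frac{e^{-2t}}{1-e^{-2t}} C^{-1} \E\bigl[ (g(e^{-t}x + \sqrt{1-e^{-2t}} Z) - g(Z))\, Z Z^T \bigr]\, dt \cdot C^{-1},
\end{equation*}
and careful manipulation (using one $\nabla$ transferred back onto $g$, which costs $\norm{g}_{\on{Lip}}$, and a change of variable to handle the $t \to 0$ singularity) yields the uniform estimate
\begin{equation*}
\sup_{x \in \R^m} \norm{\on{Hess} U_g(x)}_{\on{op}} \leq \norm{C^{-1}}_{\on{op}} \norm{C}_{\on{op}}^{1/2} \norm{g}_{\on{Lip}}.
\end{equation*}
I expect this bound to be the main obstacle: getting the exact dependence on $\norm{C^{-1}}_{\on{op}}\norm{C}_{\on{op}}^{1/2}$ (rather than, say, a crude $\norm{C^{-1}}_{\on{op}}^{3/2}$) requires exploiting the interpolation structure optimally, which is the whole point of the multivariate smoothing lemma behind this theorem.

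With $U_g$ and its derivative bound in hand, the Malliavin step is clean. Evaluating the Stein PDE at $F$ and taking expectations gives
\begin{equation*}
\E g(F) - \E g(Z) = \E\bigl[ \langle C, \on{Hess} U_g(F) \rangle_{\on{HS}} \bigr] - \sum_{j=1}^m \E\bigl[ F_j\, \partial_j U_g(F) \bigr].
\end{equation*}
Since $F_j = LL^{-1} F_j = -\delta D L^{-1} F_j$ where $\delta$ is the divergence operator adjoint to $D$, and since $\partial_j U_g(F) \in \mathbb{D}^{1,2}$ by the chain rule (which uses $F_i \in \mathbb{D}^{1,4}$ together with the bounded second derivatives of $U_g$), the Malliavin integration by parts formula gives
\begin{equation*}
\E\bigl[ F_j\, \partial_j U_g(F) \bigr] = \sum_{i=1}^m \E\bigl[ \partial_i \partial_j U_g(F) \, \langle DF_i, -DL^{-1} F_j \rangle_{\mathfrak{H}} \bigr].
\end{equation*}

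Finally, I would combine the two displays to obtain
\begin{equation*}
\E g(F) - \E g(Z) = \sum_{i,j=1}^m \E\Bigl[ \partial_i \partial_j U_g(F) \bigl( C_{ij} - \langle DF_i, -DL^{-1} F_j \rangle_{\mathfrak{H}} \bigr) \Bigr],
\end{equation*}
then apply Cauchy--Schwarz in the form $\abs{\sum_{i,j} A_{ij} B_{ij}} \leq \norm{A}_{\on{HS}} \norm{B}_{\on{HS}}$ pointwise and $\norm{\on{Hess} U_g}_{\on{HS}} \leq \sqrt{m} \norm{\on{Hess} U_g}_{\on{op}}$ followed by a second Cauchy--Schwarz in $\omega$ (or more directly, Cauchy--Schwarz entry by entry) to obtain the asserted bound. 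Taking the supremum over 1-Lipschitz $g$ yields the theorem; the approximation of general Lipschitz $g$ by $C^2$ functions with the same Lipschitz constant closes the argument.
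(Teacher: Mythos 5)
First, a point of reference: the paper gives no proof of this statement --- it is quoted verbatim as Theorem 6.1.1 of the cited monograph of Nourdin and Peccati, so the only meaningful comparison is with the standard proof in that reference. Your sketch follows exactly that standard route (Stein equation for $N_m(0,C)$ solved by the Mehler/Ornstein--Uhlenbeck representation, a uniform bound on $\on{Hess}U_g$, the identity $F_j=-\delta DL^{-1}F_j$ and Malliavin integration by parts using $F_i\in\mathbb{D}^{1,4}$, then Cauchy--Schwarz), and the architecture is correct.

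There is, however, one genuine gap in the final bookkeeping: you bound the Hessian in \emph{operator} norm and then pass to the Hilbert--Schmidt norm via $\norm{\on{Hess}U_g}_{\on{HS}}\leq\sqrt{m}\,\norm{\on{Hess}U_g}_{\on{op}}$. Tracing this through your last display gives
\begin{equation*}
\abs{\E g(F)-\E g(Z)}\leq \sqrt{m}\,\norm{C^{-1}}_{\on{op}}\norm{C}_{\on{op}}^{1/2}\sqrt{\textstyle\sum_{i,j}\E\left(\left(C_{ij}-\langle DF_i,-DL^{-1}F_j\rangle_{\mathfrak{H}}\right)^2\right)},
\end{equation*}
which is strictly weaker than the stated inequality by a factor of $\sqrt{m}$. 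This is not cosmetic here: in the application $m=n(n+1)/2$, so the extra factor would degrade the rate in Theorem 1 from $\sqrt{n^3/d}$ to roughly $\sqrt{n^5/d}$. The correct argument (and the content of the smoothing lemma behind Theorem 6.1.1) bounds the Hilbert--Schmidt norm of the Hessian \emph{directly}, with no dimension factor: after a single Gaussian integration by parts one writes $\on{Hess}U_g(x)$ as an integral over the interpolation parameter of expectations of the rank-one matrices $\nabla g(\cdot)\otimes C^{-1}Z$, and for rank-one matrices $\norm{a\otimes b}_{\on{HS}}=\norm{a}_{\R^m}\norm{b}_{\R^m}$; combined with $\norm{\nabla g}_{\R^m}\leq\norm{g}_{\on{Lip}}$ and a duality/Cauchy--Schwarz estimate on $\E\left[\,\cdot\,C^{-1}Z\right]$ this yields $\sup_x\norm{\on{Hess}U_g(x)}_{\on{HS}}\leq\norm{C^{-1}}_{\on{op}}\norm{C}_{\on{op}}^{1/2}\norm{g}_{\on{Lip}}$, after which your concluding Cauchy--Schwarz step closes the proof with the right constant. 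Two minor further slips: your Stein equation has the sign of $g(x)-\E g(Z)$ reversed relative to the generator identity $\int_0^\infty\frac{d}{dt}P_tg\,dt=\E g(Z)-g$ (harmless, since only absolute values are used), and the displayed double-integration-by-parts formula for $\on{Hess}U_g$ is missing the diagonal correction term coming from the second Hermite polynomial $C^{-1}zz^TC^{-1}-C^{-1}$ --- though you rightly indicate you would instead keep one derivative on $g$.
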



\subsection{Independent random variables in Wiener chaos}

This section prepares the proof of Theorem
\ref{mainresult-independence} by providing results related to the
independence of multiple Wiener integrals. By a standard argument
based on the fact that separable Hilbert spaces are isometrically isomorphic, we may assume, when
it serves the clarity of our exposition, that $\mathfrak{H}=L^ {2}(T, \mathcal{B}, \mu)$ where
$\mu$ is a $\sigma$-finite measure without atoms.
\\~\\
Recall that the entries of the matrix $\X$, on which our Wishart
matrices are based, are independent multiple Wiener integrals of possibly
different orders. The independence of random variables in Wiener chaos
can be characterized in terms of their kernels via the celebrated
\"Ust\"unel-Zakai criterion (see \cite{ustunel_independence_1989}), which we will
intensively make use of in the sequel. We recall the criterion here for
convenience.
\begin{theorem}[\"Ust\"unel-Zakai \cite{ustunel_independence_1989}]
For any $n,m \geq 1$, let $f \in \mathfrak{H}^{\otimes n}$ and $g \in
\mathfrak{H}^{\otimes m}$. The multiple Wiener integrals $I_{n}(f)$ and $I_{m}(g)$ are independent if and only if 
\begin{equation}
  \label{f1}
  f\otimes _{1} g=0 \mbox{ almost everywhere on }  \mathfrak{H} ^{\otimes m+n-2}.
\end{equation}
\end{theorem}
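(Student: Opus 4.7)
My plan is to split the proof into the necessity and sufficiency directions, both anchored in the multiplication formula \eqref{prod} and the isometry \eqref{iso}. Since $I_n(f) = I_n(\tilde{f})$, I would assume throughout that $f$ and $g$ are symmetric. For the necessity direction, I would use the independence hypothesis to get $\E\bigl[(I_n(f) I_m(g))^2\bigr] = n!\,m!\|f\|^2\|g\|^2$. Expanding the product $I_n(f)I_m(g)$ via \eqref{prod}, squaring, and applying \eqref{iso} would yield
\begin{equation*}
\E\bigl[(I_n(f) I_m(g))^2\bigr] = \sum_{r=0}^{n \wedge m} (r!)^2 \binom{n}{r}^2 \binom{m}{r}^2 (n+m-2r)!\,\|\widetilde{f \otimes_r g}\|^2,
\end{equation*}
where $\widetilde{\cdot}$ denotes symmetrization. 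The $r = 0$ contribution $(n+m)!\,\|\widetilde{f \otimes g}\|^2$ admits a further decomposition (obtained by expanding the symmetrization and classifying permutations of $\{1,\dots,n+m\}$ by the number of original $f$-indices they send into the $g$-block),
\begin{equation*}
(n+m)!\,\|\widetilde{f \otimes g}\|^2 = \sum_{r=0}^{n \wedge m} \binom{n}{r}\binom{m}{r}\,n!\,m!\,\|f \otimes_r g\|^2,
\end{equation*}
whose $r = 0$ term cancels $\E[I_n(f)^2]\,\E[I_m(g)^2]$ exactly. Combining both identities, $\on{Cov}\bigl(I_n(f)^2, I_m(g)^2\bigr)$ emerges as a nonnegative linear combination of the quantities $\|f \otimes_r g\|^2$ and $\|\widetilde{f \otimes_r g}\|^2$ for $r = 1, \ldots, n \wedge m$ with strictly positive coefficients; the independence hypothesis forces this sum to vanish, and in particular $f \otimes_1 g = 0$.

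For the sufficiency direction, my strategy hinges on the following contraction propagation lemma: \emph{if $f \otimes_1 g = 0$, then $f \otimes_r g = 0$ for every $r = 1, \ldots, n \wedge m$}. I would establish it by fixing an orthonormal basis $(e_i)_{i \geq 1}$ of $\mathfrak{H}$ and writing $f, g$ in the induced tensor basis with fully symmetric coefficients $(c_{i_1 \cdots i_n})$ and $(d_{j_1 \cdots j_m})$; the hypothesis then reads $\sum_k c_{\alpha_1 \cdots \alpha_{n-1} k}\, d_{k \beta_1 \cdots \beta_{m-1}} = 0$ for every multi-indices $\alpha, \beta$, while the coefficient of $f \otimes_r g$ equals $\sum_{k_1, \ldots, k_r} c_{\cdot, k_1, \ldots, k_r}\, d_{k_1, \ldots, k_r, \cdot}$. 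Using the joint symmetry of $c$ and $d$, I would isolate a single summation variable, say $k_r$, as the last index of $c$ and the first of $d$, absorbing the remaining $k_i$'s into the outer (fixed) indices, so that the inner sum over $k_r$ vanishes by the $\otimes_1$ hypothesis. Granted the lemma, formula \eqref{prod} collapses to $I_n(f) I_m(g) = I_{n+m}(\widetilde{f \otimes g})$, and iterating it on $I_n(f)^p I_m(g)^q$ would show that cross-contractions between copies of $f$ and copies of $g$ never arise, so that $\E[I_n(f)^p I_m(g)^q] = \E[I_n(f)^p]\,\E[I_m(g)^q]$ for every $p, q \geq 0$. Since the joint distribution of two multiple Wiener integrals is moment-determinate, this factorization of all joint moments implies independence.

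The hard part will be the propagation lemma. Though intuitively natural (one vanishing contraction cannot be compensated for by higher-order ones), a direct $L^2(T,\mu)$ proof would require evaluating $f \otimes_1 g$ on a measure-zero diagonal in $T^{n+m-2}$, which is ill-defined for an $L^2$ equivalence class. The basis-level combinatorial argument outlined above sidesteps this obstacle by exploiting the joint symmetry of $f$ and $g$ directly at the level of tensor coefficients.
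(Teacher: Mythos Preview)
The paper does not prove this theorem at all: it is quoted verbatim from \"Ust\"unel--Zakai \cite{ustunel_independence_1989} and used as a black box, so there is no ``paper's own proof'' to compare against. What I can do is assess your sketch and contrast it with the original argument.

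Your necessity direction is the standard one and is fine. The propagation lemma is also correct, and your coefficient argument is exactly the right way to prove it (indeed the paper states this lemma as a Remark, without proof).

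For sufficiency, your moment route is viable but has two soft spots. First, the sentence ``iterating it on $I_n(f)^p I_m(g)^q$ would show that cross-contractions\dots never arise'' is not covered by the propagation lemma as you stated it: that lemma handles $f\otimes_r g$, whereas after one round of the product formula you must contract kernels of the form $f\tilde{\otimes}_s f$ against $g$ (and later against $g\tilde{\otimes}_t g$, etc.). It is true that all such cross-contractions vanish, but this requires exactly the symmetrization-unwinding computation carried out in Lemma~\ref{ll2} of the paper, not just your lemma for $r\ge 1$. Second, you invoke moment-determinacy of the \emph{joint} law of $(I_n(f),I_m(g))$; this holds because chaos elements satisfy hypercontractive moment growth $\|I_q(h)\|_p\le (p-1)^{q/2}\|I_q(h)\|_2$, hence Carleman's condition, but you should say so rather than leave it implicit.

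The original \"Ust\"unel--Zakai argument sidesteps both issues and is shorter. The condition $f\otimes_1 g=0$ is equivalent to $H_f\perp H_g$ in $\mathfrak{H}$, where $H_f$ denotes the closed linear span of all one-variable slices $\langle f,\varphi\rangle_{\mathfrak{H}^{\otimes(n-1)}}$ of $f$ (and similarly $H_g$). By symmetry one checks $f\in H_f^{\otimes n}$, so $I_n(f)$ is measurable with respect to $\sigma\bigl(W(h):h\in H_f\bigr)$, and likewise $I_m(g)$ with respect to $\sigma\bigl(W(h):h\in H_g\bigr)$. Orthogonality $H_f\perp H_g$ makes these two Gaussian $\sigma$-fields independent, and independence of $I_n(f)$ and $I_m(g)$ follows immediately---no iterated product formula, no moment problem. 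Your approach has the merit of being purely combinatorial and self-contained within the chaos calculus already set up in the paper; the trade-off is the extra bookkeeping above.
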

\begin{remark}
Relation \eqref{f1}  also implies that
$$f\otimes _{r} g=0 \mbox{ almost everywhere on }  \mathfrak{H} ^{\otimes m+n-2r}$$
for all $1 \leq r\leq n \wedge m$. 
\end{remark}
\noindent We will also need the notion of strong independence of random
variables introduced in \cite{bourguin_cramer_2011} (to which we refer for various properties of strongly independent random variables).
\begin{definition}
  Two random variables $X$ and $Y$ with Wiener chaos decomposition
\begin{equation*}
X=\sum_{n=0}^{\infty}I_{n}(f_{n})\quad \mbox{and}\quad Y=\sum_{m=0}^{\infty} I_{m} (g_{m}),
\end{equation*}
where $f_{n}\in \mathfrak{H}^ {\odot n}$, $g_{m}\in \mathfrak{H}^{\odot m}$ for
every $n,m\geq 0$, are said to be strongly independent if every chaos component of $X$ is independent of every chaos component of $Y$, i.e., for every $n,m\geq 0$, the random variables $I_{n} (f_{n}) $ and $I_{m}(g_{m}) $ are independent.
\end{definition}
\noindent The following lemma assesses the strong independence of squares of
chaotic random variables.
\begin{lemma}\label{ll2}
Let $X=I_{n}(f)$, $f\in \mathfrak{H}^{\odot n}$ and $Y=I_{m}(g)$, $g\in
\mathfrak{H}^ {\odot m}$ be independent. Then, the random variables $ X ^ {2}$ and $Y^ {2}$ are strongly independent. 
\end{lemma}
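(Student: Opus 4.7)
The strategy is to reduce strong independence of $X^2$ and $Y^2$ to a collection of scalar independence statements, each of which can be verified with the \"Ust\"unel-Zakai criterion. Applying the multiplication formula \eqref{prod} to $X = I_n(f)$ and $Y = I_m(g)$ gives the chaos decompositions
\begin{equation*}
X^2 = \sum_{r=0}^{n} r! \binom{n}{r}^{2} I_{2n-2r}(f \otimes_r f), \qquad Y^2 = \sum_{s=0}^{m} s! \binom{m}{s}^{2} I_{2m-2s}(g \otimes_s g).
\end{equation*}
By the definition of strong independence, I must therefore show that $I_{2n-2r}(f \otimes_r f)$ and $I_{2m-2s}(g \otimes_s g)$ are independent for every $0 \leq r \leq n$ and $0 \leq s \leq m$. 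The cases $r = n$ or $s = m$ give constants and are automatic, so I fix $0 \leq r < n$ and $0 \leq s < m$.

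By the \"Ust\"unel-Zakai criterion, it then suffices to prove the contraction identity
\begin{equation*}
(f \otimes_r f) \otimes_1 (g \otimes_s g) = 0 \quad \mbox{a.e. on } \mathfrak{H}^{\otimes 2(n-r)+2(m-s)-2}
\end{equation*}
(together with the same identity for the symmetrizations, which follows by linearity of $\otimes_1$ and the invariance of the conclusion under permutation of arguments). To prove it, I would pass to the concrete model $\mathfrak{H} = L^2(T, \mathcal{B}, \mu)$, write out explicitly the inner contractions defining $f \otimes_r f$ and $g \otimes_s g$, and apply Fubini's theorem. After interchanging the order of integration, the integrand reorganizes so that the only dependence on the contraction variable $u \in T$ comes from a single copy of $f$ and a single copy of $g$, producing the factor
\begin{equation*}
\int_T f(u, \ldots) g(u, \ldots) d\mu(u) = (f \otimes_1 g)(\ldots).
\end{equation*}

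Since $X$ and $Y$ are independent, applying the \"Ust\"unel-Zakai criterion in the reverse direction yields $f \otimes_1 g = 0$ a.e.; consequently the displayed integral vanishes for almost every choice of the remaining variables, and so does the full contraction. The main (mild) obstacle is the index bookkeeping in the Fubini step: one must carefully track which pair among the four kernel copies (two $f$'s and two $g$'s) becomes cross-contracted, and verify that this cross-contraction always produces the zero factor $f \otimes_1 g$, regardless of how the arguments are permuted by the symmetrization of $f \otimes_r f$ and $g \otimes_s g$. Once this bookkeeping is carried out, the conclusion is immediate from the hypothesis.
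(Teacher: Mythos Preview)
Your proposal is correct and follows essentially the same approach as the paper: expand $X^2$ and $Y^2$ via the product formula, reduce strong independence to the \"Ust\"unel-Zakai criterion for each pair of chaos components, and verify the required vanishing of $(f\tilde{\otimes}_r f)\otimes_1(g\tilde{\otimes}_s g)$ by writing out the symmetrizations explicitly and using Fubini to isolate a factor $f\otimes_1 g=0$ coming from the independence hypothesis. The paper carries out the permutation bookkeeping you flag at the end in full detail, but the structure of the argument is identical.
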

\begin{proof}
By the product formula for multiple Wiener integrals \eqref{prod},
$$X ^ {2}= \sum_{r_{1}=0} ^ {n} r_{1}! \binom{n}{r_1} ^ {2} I_{2n-2r_{1}} (f\otimes _{r_{1}} f) $$
and
$$ Y^ {2}= \sum_{r_{2}=0}^ {m}   r_{2}! \binom{m}{r_2} ^ {2} I_{2m-2r_{2}} (g\otimes _{r_{2}} g).$$
It suffices to show that for every $0\leq r_{1}\leq n-1$ and $0\leq
r_{2} \leq m-1$, the random variables $I_{2n-2r_{1}} (f\otimes
_{r_{1}} f)$ and $I_{2m-2r_{2}} (g\otimes _{r_{2}} g)$ are independent, which by \eqref{f1} is equivalent to
\begin{equation}\label{27i-3}
\left(f\tilde{\otimes }_{r_{1}}f\right)   \otimes _{1} \left(
  g\tilde{\otimes }_{r_{2}}g\right) =0
\end{equation}
almost everywhere on $\mathfrak{H} ^ {\otimes 2n + 2m -2r_1 -2r_2}$. By
the definition of contractions \eqref{contra}, with $\mathfrak{S}_{n}$
denoting the group of permutations of $\{1,\ldots, n\}$, we have
\begin{eqnarray*}
&& (f\tilde{\otimes }_{r_{1}} f) (t_{1},\ldots, t_{2n-2r_{1}}) \\
  &&\qquad =\frac{1}{(2n-2r_{1})!}\sum_{\sigma \in \mathfrak{S}_{2n-2r_{1}}} \int_{T ^{r_{1}}} f(u_{1},\ldots,u_{r_{1}},t_{\sigma(1)},\ldots,t_{\sigma(n-r_{1})})\\
  && \qquad\qquad\qquad\qquad\qquad\quad f(u_{1},\ldots,
     u_{r_{1}}, t_{\sigma (n-r_{1}+1)}, \ldots, t_{\sigma
     (2n-2r_{1})}) du_{1}\cdots du_{r_{1}}.
\end{eqnarray*}
Similarly,
\begin{eqnarray*}
&& (g\tilde{\otimes }_{r_{2}} g) (t_{1}, \ldots, t_{2m-2r_{2}}) \\
  &&\qquad =\frac{1}{(2m-2r_{2})!}\sum_{\tau \in \mathfrak{S}_{2m-2r_{2}}} \int_{T ^{r_{2}}} g(u_{1},\ldots,u_{r_{2}},t_{\tau(1)},\ldots,t_{\tau(m-r_{2})})\\
  && \qquad\qquad\qquad\qquad\qquad\quad g(u_{1},\ldots,
     u_{r_{2}}, t_{\tau (m-r_{2}+1)},\ldots, t_{\tau (2m-2r_{2})})
     du_{1}\cdots du_{r_{2}}.
\end{eqnarray*}
Hence, we can write
\begin{eqnarray}
  \label{27i-1}
&&\left( \left(f\tilde{\otimes }_{r_{1}}f\right)   \otimes _{1} \left( g\tilde{\otimes }_{r_{2}}g\right)\right) (t_{1},\ldots, t_{2n-2r_{1}+ 2m-2r_{2}-2}) \nonumber\\
&&\qquad = \int_{T} (f\tilde{\otimes}_{r_{1}} f) (t_{1},\ldots,
   t_{2n-2r_{1}-1}, x) \nonumber \\
  &&\qquad\qquad\qquad\qquad\qquad (g\tilde{\otimes }_{r_{2}}g)(t_{2n-2r_{1}},\ldots, t_{2n-2r_{1}+2m-2r_{2}-2}, x)dx.
\end{eqnarray}
Note that for a symmetric function $h\in \mathfrak{H} ^ {\odot n}$, it
holds that
\begin{equation*}
\tilde{h}(t_{1},\ldots, t_{n-1}, x) = \frac{1}{n!} \sum_{\sigma \in \mathfrak{S}_{n-1} } \sum_{i=1}^ {n} h\left(t_{\sigma (1)},\ldots, t_{\sigma (i-1)}, x, t_{\sigma(i+1)},\ldots, t_{\sigma (n-1) }\right),
\end{equation*}
so that by plugging the above identity into \eqref{27i-1}, we get
\begin{eqnarray*}
&&\left[ \left(f\tilde{\otimes }_{r_{1}}f\right)   \otimes _{1} \left(
   g\tilde{\otimes }_{r_{2}}g\right)\right] (t_{1},\ldots,
   t_{2n-2r_{1}+ 2m-2r_{2}-2}) \nonumber\\
&&\qquad\qquad = \frac{1}{(2n-2r_{1}-1)!(2m-2r_{2}-1)!} \sum_{\sigma \in
    \mathfrak{S}_{2n-2r_{1}-1}, \tau \in \mathfrak{S}_{2m-2r_{2}-1}}\sum_{i=1}^ {2n-2r_{1}} \sum_{j=1} ^ {2m-2r_{2}} \\
&&\int_ {T} (f\otimes_{r_{1}} f) (t_{\sigma (1)},\ldots, t_{\sigma (i-1)}, x, t_{\sigma (i+1)},\ldots, t_{\sigma (2n-2r_{1}-1)})\\
&& \qquad\qquad\qquad\qquad\qquad (g\otimes _{r_{2}}g) (t_{\tau(1)},\ldots,t_{\tau(j-1)}, x, t_{\tau(j+1)},\ldots, t_{\tau (2m-2r_{2}-1)})dx.
\end{eqnarray*}
To obtain \eqref{27i-3}, it suffices to show that for all $1\leq i\leq
2n-2r_{1}$ and $1\leq j\leq 2m-2r_{2}$,
\begin{eqnarray}
\label{27i-4}
&&\int_ {T} (f\otimes_{r_{1}} f) (t_{\sigma (1)},\ldots, t_{\sigma (i-1)}, x, t_{\sigma (i+1)},\ldots, t_{\sigma (2n-2r_{1}-1)})\\
&& \qquad\qquad\qquad\qquad\qquad (g\otimes _{r_{2}}g)
   (t_{\tau(1)},\ldots,t_{\tau(j-1)}, x, t_{\tau(j+1)},\ldots, t_{\tau
   (2m-2r_{2}-1)})dx=0\nonumber
\end{eqnarray}
almost everywhere with respect to $t_{1},\ldots, t_{2n+2m-2r_{1}-2r_{2}-2}$. 
\\~\\
Assume that $1\leq i\leq n-r_{1}$ and $1\leq j\leq m-r_{2}$ (the other
cases can be dealt with in the same way). Then, we have
\begin{eqnarray*}
&&\int_ {T} (f\otimes_{r_{1}} f) (t_{\sigma (1)},\ldots, t_{\sigma (i-1)}, x, t_{\sigma (i+1)},\ldots, t_{\sigma (2n-2r_{1}-1)})\\
&&\qquad\qquad\qquad\qquad\qquad  (g\otimes _{r_{2}}g) (t_{\tau(1)},\ldots,t_{\tau(j-1)}, x, t_{\tau(j+1)},\ldots, t_{\tau (2m-2r_{2}-1)})dx\\
&&\quad =\int_{T} \int_{T^ {r_{1}}}du_{1}\cdots du_{r_{1}}
   f(u_{1},\ldots, u_{r_{1}}, x, t_{\sigma (1)},\ldots, t_{\sigma
   (n-r_{1}-1)} ) \nonumber \\
  && \qquad\qquad\qquad\qquad\qquad\qquad\qquad\qquad\qquad\qquad f(t_{\sigma (n-r_{1})},\ldots, f(t_{\sigma (2n-2r_{1}-1)})\\
&&\qquad \times \int _{T^ {r_{2}}} dv_{1}\cdots dv_{r_{2}}
   g(v_{1},\ldots,v_{r_{2}}, x,
   t_{\tau(1)},\ldots,t_{\tau(m-r_{2}-1)}) \nonumber \\
  &&\qquad\qquad\qquad\qquad\qquad\qquad\qquad\qquad\qquad\qquad g(t_{\tau (m-r_{2})},\ldots, t_{\tau (2m-2r_{2}-1)})dx. 
\end{eqnarray*} 
Now, for almost every $u_{1},\ldots, u_{r_{1}},
v_{1},\ldots,v_{r_{2}},  t_{\sigma (1)},\ldots, t_{\sigma
  (n-r_{1}-1)}, t_{\tau(1)},\ldots,t_{\tau(m-r_{2}-1)}$, \eqref{f1}
implies that

$$\int_{T}  f(u_{1},\ldots, u_{r_{1}}, x, t_{\sigma (1)},\ldots, t_{\sigma (n-r_{1}-1)} ) g(v_{1},\ldots,v_{r_{2}}, x, t_{\tau(1)},\ldots,t_{\tau(m-r_{2}-1)})dx=0,$$
which implies \eqref{27i-4} and in turn \eqref{27i-3}. 
\end{proof}
~\\
\noindent The following lemma is the statement of \cite[Lemma 2]{bourguin_cramer_2011}.

\begin{lemma}\label{ll3}
Let $X,Y$ be centered, strongly  independent random variables in $\mathbb{D}^ {1,2}$. Then
$$\langle DX, -DL ^ {-1}Y\rangle _{\mathfrak{H}}= \langle DY, -DL^ {-1}X \rangle _{\mathfrak{H}}=0.$$
\end{lemma}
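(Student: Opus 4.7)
The plan is to use the chaos decompositions of $X$ and $Y$ to reduce the claim to pairs of single-chaos random variables, and then to apply the \"Ust\"unel-Zakai criterion together with the multiplication formula \eqref{prod}. First I would write
\begin{equation*}
X=\sum_{n=1}^{\infty} I_{n}(f_n), \qquad Y=\sum_{m=1}^{\infty} I_m(g_m),
\end{equation*}
with symmetric kernels $f_n\in \mathfrak{H}^{\odot n}$ and $g_m\in \mathfrak{H}^{\odot m}$ (no constant terms appear since $X$ and $Y$ are centered). The expressions for the action of $D$ and $-DL^{-1}$ recalled in Subsection \ref{app} then give
\begin{equation*}
DX = \sum_{n=1}^{\infty} n\, I_{n-1}(f_n), \qquad -DL^{-1}Y = \sum_{m=1}^{\infty} I_{m-1}(g_m),
\end{equation*}
both seen as $\mathfrak{H}$-valued random variables, with convergence in $L^2(\Omega;\mathfrak{H})$ ensured by the $\mathbb{D}^{1,2}$ assumption. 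Truncating each series to its first $N$ terms and passing to the limit $N\to\infty$ reduces matters to showing that, for every fixed $n,m\geq 1$,
\begin{equation*}
\langle I_{n-1}(f_n),I_{m-1}(g_m)\rangle_{\mathfrak{H}}=0.
\end{equation*}

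For this core step, strong independence of $X$ and $Y$ ensures that $I_n(f_n)$ and $I_m(g_m)$ are independent, so the \"Ust\"unel-Zakai criterion \eqref{f1} combined with the subsequent remark yields $f_n\otimes_r g_m=0$ almost everywhere for every $1\leq r\leq n\wedge m$. Assuming, as in Subsection \ref{app}, that $\mathfrak{H}=L^2(T,\mathcal{B},\mu)$, the $\mathfrak{H}$-inner product can be rewritten as $\int_T I_{n-1}(f_n(\cdot,t))\,I_{m-1}(g_m(\cdot,t))\,d\mu(t)$. Applying the multiplication formula \eqref{prod} under the integral and using the identity
\begin{equation*}
\int_T f_n(\cdot,t)\otimes_r g_m(\cdot,t)\,d\mu(t) = f_n\otimes_{r+1} g_m, \quad 0\leq r\leq (n-1)\wedge(m-1),
\end{equation*}
one obtains a finite sum of multiple integrals whose kernels are contractions $f_n\otimes_{r+1} g_m$ of orders $r+1\in\{1,\ldots,n\wedge m\}$. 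Every such kernel vanishes by the previous observation, so the inner product is zero. The second identity $\langle DY,-DL^{-1}X\rangle_{\mathfrak{H}}=0$ follows by exchanging the roles of $X$ and $Y$.

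The main obstacle is bookkeeping rather than any conceptual difficulty: strong independence is precisely the input the \"Ust\"unel-Zakai criterion requires for each pair of chaos components, after which the argument collapses to a contraction identity. The only technical care needed lies in justifying the exchange of series, inner product and integral; this is routine given the $L^2$-orthogonality \eqref{iso} of multiple integrals of different orders and the summability $\sum_n n\cdot n!\norm{f_n}^2_{\mathfrak{H}^{\otimes n}}<\infty$ coming from $X\in\mathbb{D}^{1,2}$ (and analogously for $Y$).
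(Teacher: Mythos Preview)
Your argument is correct. Note, however, that the paper does not actually supply a proof of this lemma: it simply quotes the result from \cite[Lemma 2]{bourguin_cramer_2011}. What you have written is the natural proof and is in the same spirit as the argument the paper gives for the neighboring Lemma~\ref{ll44}: expand $X$ and $Y$ in chaos, compute $\langle DX,-DL^{-1}Y\rangle_{\mathfrak{H}}$ termwise via the product formula, and observe that every resulting kernel is a contraction $f_n\otimes_{r+1}g_m$ with $1\leq r+1\leq n\wedge m$, all of which vanish by the \"Ust\"unel--Zakai criterion~\eqref{f1} applied to the strongly independent components $I_n(f_n)$ and $I_m(g_m)$. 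Your handling of the limiting and Fubini-type justifications is adequate for the $\mathbb{D}^{1,2}$ setting.
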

\noindent We prove another consequence of strong independence needed
later in the paper.

\begin{lemma}\label{ll44}
Let $X, Y$ be strongly  independent random variables in $\mathbb{D}^ {1,2}$.
\begin{enumerate}[(i)]
\item The random variables  $\langle DX, -DL ^ {-1}X\rangle _{\mathfrak{H}} $ and $\langle DY, -DL ^ {-1}Y\rangle _{\mathfrak{H}}$ are strongly independent.
\item The random variables  $X $ and $\langle DY, -DL^ {-1}Y\rangle _{\mathfrak{H}}$ are strongly independent.
\end{enumerate}
\end{lemma}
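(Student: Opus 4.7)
The strategy is to chaos-expand $X$ and $Y$, identify the chaos decompositions of the two inner products in question, and then reduce the desired strong independence, via the \"Ust\"unel-Zakai criterion, to a condition on kernel contractions that can be verified by mimicking the computation in the proof of Lemma \ref{ll2}. Writing $X = \sum_{n \geq 1} I_n(f_n)$ and $Y = \sum_{m \geq 1} I_m(g_m)$, the strong independence hypothesis is equivalent via \"Ust\"unel-Zakai to $f_n \otimes_1 g_m = 0$ almost everywhere for every $n, m \geq 1$, and by the remark following that criterion the same holds for all contractions $f_n \otimes_r g_m$ with $1 \leq r \leq n \wedge m$.

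Using the identities $DI_n(f) = n I_{n-1}(f)$ and $-L^{-1}I_m(g) = \frac{1}{m} I_m(g)$ together with the product formula \eqref{prod}, one computes
\begin{equation*}
\langle DX, -DL^{-1}X\rangle_{\mathfrak{H}} = \sum_{n, m \geq 1} \sum_{r=0}^{(n-1) \wedge (m-1)} n \, r! \binom{n-1}{r}\binom{m-1}{r} I_{n+m-2-2r}(f_n \otimes_{r+1} f_m),
\end{equation*}
so every kernel appearing in the chaos expansion of $\langle DX, -DL^{-1}X\rangle_{\mathfrak{H}}$ is a finite linear combination of symmetrized contractions $\widetilde{f_n \otimes_s f_m}$ with $s \geq 1$, and analogously for $\langle DY, -DL^{-1}Y\rangle_{\mathfrak{H}}$ with kernels built from $\widetilde{g_{n'} \otimes_{s'} g_{m'}}$, $s' \geq 1$.

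To establish (i), it then suffices by the \"Ust\"unel-Zakai criterion and bilinearity of $\otimes_1$ to show that
\begin{equation*}
\widetilde{(f_n \otimes_s f_m)} \otimes_1 \widetilde{(g_{n'} \otimes_{s'} g_{m'})} = 0 \quad \mbox{a.e.}
\end{equation*}
for all admissible $n, m, n', m' \geq 1$ and $s, s' \geq 1$. Proceeding exactly as in the proof of Lemma \ref{ll2}, I would expand both symmetrizations via the formula
\begin{equation*}
\tilde{h}(t_1, \ldots, t_{p-1}, x) = \frac{1}{p!} \sum_{\sigma \in \mathfrak{S}_{p-1}} \sum_{i=1}^p h(t_{\sigma(1)}, \ldots, t_{\sigma(i-1)}, x, t_{\sigma(i+1)}, \ldots, t_{\sigma(p-1)}),
\end{equation*}
and then, depending on whether the placement of $x$ falls in an external $f_n$- or $f_m$-slot on the left (respectively a $g_{n'}$- or $g_{m'}$-slot on the right), factor the resulting $dx$-integral as
\begin{equation*}
\int_T f_k(\cdots, x)\, g_{k'}(\cdots, x)\, dx
\end{equation*}
for some $k \in \{n,m\}$ and $k' \in \{n',m'\}$, multiplied by quantities not depending on $x$. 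Each such integral vanishes almost everywhere because $f_k \otimes_1 g_{k'} = 0$ a.e.

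Statement (ii) is handled identically, with the first factor being simply $f_n$ so that no prior contraction has to be unravelled: one only needs to expand the symmetrization of $g_{n'} \otimes_{s'} g_{m'}$ and reduce to $\int_T f_n(\cdots, x) g_{k'}(\cdots, x)\, dx = 0$ for $k' \in \{n', m'\}$. The main technical obstacle is purely combinatorial, namely the careful bookkeeping of permutations and of the possible positions of $x$ that is required to verify that \emph{every} term produced by the expansion of the symmetrizations contains at least one factor $\int_T f \cdot g\, dx$ pairing an $X$-kernel with a $Y$-kernel; once that is in place, the strong independence of $X$ and $Y$ finishes the argument.
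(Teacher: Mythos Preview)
Your proposal is correct and follows essentially the same approach as the paper: chaos-expand $X$ and $Y$, use the product formula to write $\langle DX,-DL^{-1}X\rangle_{\mathfrak{H}}$ and $\langle DY,-DL^{-1}Y\rangle_{\mathfrak{H}}$ as sums of multiple integrals with kernels of the form $f_{n_1}\tilde\otimes_{r+1}f_{n_2}$ (resp.\ $g_{m_1}\tilde\otimes_{r'+1}g_{m_2}$), and then reduce via \"Ust\"unel--Zakai to showing that the $1$-contraction of any such pair of kernels vanishes, which is exactly the computation carried out in the proof of Lemma~\ref{ll2}. The paper simply invokes that proof, while you spell out the combinatorial bookkeeping a bit more explicitly, but the argument is the same.
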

\begin{proof}
Let us prove $(i)$ (the proof of $(ii)$ follows in a similar way by
the same arguments, and an analogous result has been proved in
\cite[Lemma 1]{bourguin_cramer_2011}).   Assume 
\begin{equation*}
X=\sum_{n=0}^{\infty} I_{n}(f_{n}) \quad \mbox{and}\quad Y=\sum_{m=0}^{\infty} I_{m}(g_{m}),
\end{equation*}
where $f_{n} \in \mathfrak{H} ^ {\odot n}$ and $g_{m}\in \mathfrak{H}^ {\odot m}$ for every
$n,m\geq 0$.  Then, we have
\begin{equation*}
D_{\theta}X= \sum_{n\geq 1} n I_{n-1}(f_{n} (\cdot , \theta)) \quad \mbox{and}\quad  -D_{\theta}L ^ {-1} X= \sum_{n\geq 1}  I_{n-1}(f_{n} (\cdot , \theta)),
\end{equation*}
where $I_{n-1}(f_{n} (\cdot , \theta))$ denotes the multiple Wiener
integral of the function $$(t_{1},\ldots, t_{n-1}) \mapsto
f_{n}(t_{1},\ldots, t_{n-1}, \theta).$$
Then, it holds that
\begin{eqnarray*}
\langle DX, -DL^ {-1} X \rangle _{\mathfrak{H}}&=& \sum _{n_{1}, n_{2}= 1}^{\infty} n_{1} \int_{T} I_{n_{1}-1} \left(f_{n_{1}}(\cdot, x)  \right)I_{n_{2}-1}\left( f_{n_{2}} (\cdot, x) \right)dx \\
&=&\sum _{n_{1}, n_{2}= 1}^{\infty} n_{1}\sum_{r=0} ^ {n_{1} \wedge n_{2}-1} \binom{n_1}{r}\binom{n_2}{r} I _{n_{1}+n_{2}-2r-2} (f_{n_{1}}\tilde{ \otimes } _{r+1} f_{n_{2}}). 
\end{eqnarray*}
Similarly, 
\begin{equation*}
\langle DY, -DL^ {-1} Y \rangle _{\mathfrak{H}} = \sum _{m_{1}, m_{2}= 1}^{\infty} m_{1}\sum_{r=0} ^ {m_{1} \wedge m_{2}-1} \binom{m_1}{r}\binom{m_2}{r} I _{m_{1}+m_{2}-2r-2} (g_{m_{1}}\tilde{ \otimes } _{r+1} g_{m_{2}}). 
\end{equation*}
The conclusion is obtained if we prove that for every $0\leq r_{1}\leq
n_{1}\wedge n_{2}-1$ and for every $0\leq r_{2} \leq m_{1} \wedge
m_{2}-1$, the random variables $ I _{n_{1}+n_{2}-2r-2}
(f_{n_{1}}\tilde{ \otimes } _{r_{1}+1} f_{n_{2}})$ and $  I
_{m_{1}+m_{2}-2r-2} (g_{m_{1}}\tilde{ \otimes } _{r_{2}+1} g_{m_{2}})$
are independent, or equivalently, that 
\begin{equation}\label{27i-5}
(f_{n_{1}}\tilde{ \otimes } _{r_{1}+1} f_{n_{2}})\otimes _{1}  (g_{m_{1}}\tilde{ \otimes } _{r_{2}+1} g_{m_{2}})=0 \mbox{ a.e. } 
\end{equation}
Since for every $n,m\geq 0$, we have $f_{n}\otimes _{1} g_{m}=0$ almost everywhere on $T^ {m+n-2}$, \eqref{27i-5} follows from the proof of Lemma \ref{ll2}. 
\end{proof}
~\\
\noindent Let us illustrate what the above results on strong
independence imply about the entries of the matrix $\X_{n,d}$. We
begin by introducing some notation. For every $1\leq i\leq n$ and for every $1\leq k,l\leq d$, we define
\begin{equation}
\label{fikl}
F_{ikl} = \langle D(X_{ik}^ {2}-1),  -DL^ {-1} (X_{il} ^
{2}-1)\rangle _{\mathfrak{H}}.
\end{equation}
We then have the following lemma.
\begin{lemma}\label{ll4}
Let the above notation prevail. 
\begin{enumerate}[(i)]
\item If $k\neq l$, $F_{ikl} =0$ almost surely.
\item For every $k,l=1,\ldots, d$ with $k\neq l$, the random variables $F_{ikk}$ and $F_{ill}$ are independent. 
\end{enumerate}
\end{lemma}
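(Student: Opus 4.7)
The plan is to reduce both parts of Lemma \ref{ll4} directly to the machinery assembled in Lemmas \ref{ll2}, \ref{ll3}, and \ref{ll44}, since all the heavy lifting about strong independence has already been done. The key observation is that for any $1\leq i\leq n$ and $1\leq k\neq l\leq d$, the entries $X_{ik}=I_{q_i}(f_{ik})$ and $X_{il}=I_{q_i}(f_{il})$ lie in the same Wiener chaos of order $q_i$ and are independent by hypothesis on $\X_{n,d}$. Applying the \"Ust\"unel--Zakai criterion gives $f_{ik}\otimes_1 f_{il}=0$ a.e., and then Lemma \ref{ll2} (applied with $f=f_{ik}$, $g=f_{il}$, $n=m=q_i$) yields that $X_{ik}^2$ and $X_{il}^2$ are strongly independent. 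Subtracting the deterministic constant $1$ does not affect strong independence, so the centered variables $X_{ik}^2-1$ and $X_{il}^2-1$ are also strongly independent; this is the single fact that both parts rely on.

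For part $(i)$, I would simply invoke Lemma \ref{ll3} with $X=X_{ik}^2-1$ and $Y=X_{il}^2-1$ to conclude that
\begin{equation*}
F_{ikl}=\langle D(X_{ik}^2-1),\,-DL^{-1}(X_{il}^2-1)\rangle_{\mathfrak{H}}=0
\end{equation*}
almost surely. One should briefly check that $X_{ik}^2-1$ and $X_{il}^2-1$ lie in $\mathbb{D}^{1,2}$, which follows from the fact that all $X_{ij}$ are in a fixed finite-order chaos (bounded by $N_0$), so their squares belong to every Sobolev--Watanabe space by hypercontractivity, in particular to $\mathbb{D}^{1,2}$.

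For part $(ii)$, I would apply Lemma \ref{ll44}$(i)$, again with $X=X_{ik}^2-1$ and $Y=X_{il}^2-1$, which are strongly independent and centered elements of $\mathbb{D}^{1,2}$. This gives strong independence of
\begin{equation*}
\langle DX,-DL^{-1}X\rangle_{\mathfrak{H}}=F_{ikk}\qquad\text{and}\qquad \langle DY,-DL^{-1}Y\rangle_{\mathfrak{H}}=F_{ill}.
\end{equation*}
Since strong independence implies ordinary independence (each chaos component of $F_{ikk}$ is independent of each chaos component of $F_{ill}$, and both variables are sums of such components), the conclusion follows.

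No part of the argument presents a real obstacle, since the infrastructure is already in place. The only mildly delicate point to phrase carefully is the passage from ``strongly independent'' squares to the strong independence of the centered squares $X_{ik}^2-1$ and $X_{il}^2-1$: one must note that subtracting the constant $1$ only alters the $0$th chaos projection, leaving all higher chaos components (and hence all independence relations between those components) unchanged. Once this is observed, both $(i)$ and $(ii)$ are immediate consequences of the previously established lemmas.
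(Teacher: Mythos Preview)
Your proposal is correct and follows essentially the same route as the paper's own proof: use Lemma~\ref{ll2} to obtain strong independence of $X_{ik}^2$ and $X_{il}^2$, then appeal to Lemma~\ref{ll3} for part $(i)$ and to Lemma~\ref{ll44}$(i)$ for part $(ii)$. Your version is more explicit about the auxiliary checks (membership in $\mathbb{D}^{1,2}$, stability of strong independence under centering, and the passage from strong to ordinary independence), but the underlying argument is identical.
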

\begin{proof}
By Lemma \ref{ll2}, $X_{ik}^ {2}$ and   $ X_{il}^ {2}$  are strongly
independent random variables. Lemma \ref{ll3} yields $(i)$, and Lemma 4 implies $(ii)$.  
\end{proof}

\subsection{Proof of Theorem \ref{mainresult-independence}}

This subsection is dedicated to the proof of Theorem
\ref{mainresult-independence}. We restate it here for convenience.
\begin{customthm}{1}
Consider the renormalized  Wishart matrix $\widetilde{\W}_{n, d}$ with
entries given by \eqref{hatwij}. Then for every $n\geq 1$,
$\widetilde{\W}_{n, d}$  converges in distribution componentwise, as $d\to \infty$,  to
the matrix $\Z_n$ given by \eqref{goe}. Moreover, there exists a positive constant $C$ such that for every $n,d\geq
1$, 
\begin{equation*}
d_{W} (\widetilde{\W}_{n, d}, \Z_n ) \leq C\sqrt{ \frac{n ^ {3}}{d}}.
\end{equation*}
\end{customthm}
\begin{proof}
Lemma \ref{ll1} combined with Theorem \ref{tt1} implies that we need
to estimate the quantity
\begin{equation*}
\E \left(\left(  \langle D\widetilde{W} _{ij}, -DL^ {-1}\widetilde{W}_{ab} \rangle_{\mathfrak{H}} -\E\left( Z_{ij}Z_{ab}\right) \right)^2\right)
\end{equation*}
for every $1\leq i,j, a, b\leq n$ with $i\leq j$ and $a\leq b$, and
$Z_{ij}$ as in \eqref{goe}. Note that $\E \left( Z_{ii}^ {2}\right)=
m_{4}-1$, $\E\left(Z_{ij}^ {2}\right)=1$ if $i\neq j$, and $\E \left(
  Z_{ij}Z_{ab}\right)=0$ if $(i,j)\neq (a,b)$.
\\~\\
\noindent  {\bf Step 1: calculation of $ \E\left( \left(  \langle
      D\widetilde{W}_{ii}, -DL^{-1} \widetilde{W}_{ii}\rangle_{\mathfrak{H}} -(m_{4}
      -1) \right) ^{2}\right)  $}.
\\~\\
By \eqref{wii} and the strong independence proved in Lemma \ref{ll4}, for every $1\leq i\leq n$, it
holds that
\begin{eqnarray*}
 \langle D\widetilde{W}_{ii}, -DL^{-1}\widetilde{ W}_{ii}\rangle_{\mathfrak{H}} &=& \frac{1}{d} \sum_{k,l=1}^ {d}  \langle D(X_{ik}^{2}-1), -DL^{-1} ( X_{il}^{2}-1) \rangle_{\mathfrak{H}}\\
&=&\frac{1}{d} \sum_{k=1} ^{d} \langle D(X_{ik}^{2}-1), -DL^{-1} ( X_{ik}^{2}-1) \rangle_{\mathfrak{H}} = \frac{1}{d} \sum_{k=1} ^{d}  F_{ikk},
\end{eqnarray*}
where $F_{ikk}$ is given by \eqref{fikl}.  Since  for every $G\in
\mathbb{D}^ {1,2}$, $\E \left( G^{2} \right)  = \E \left(\langle DG,
  -DL^{-1} G\rangle_{\mathfrak{H}}  \right) $, we can write, using \eqref{m2m4},
\begin{eqnarray*}
\E  \left(\langle D\widetilde{W}_{ii}, -DL^{-1}
  \widetilde{W}_{ii}\rangle_{\mathfrak{H}}\right)&=&  \frac{1}{d}
                                                 \sum_{k=1} ^{d}
                                                 \E\left(\langle
                                                 D(X_{ik}^{2}-1),
                                                 -DL^{-1} (
                                                 X_{ik}^{2}-1)
                                                 \rangle_{\mathfrak{H}}\right)
  \\
&=&\frac{1}{d}\sum_{k=1}^ {d} \E\left(\left(X_{ik}^ {2}-1\right)^{2}\right)= m_{4}-1.
\end{eqnarray*}
Hence, we can write
\begin{eqnarray}
&&\E \left( \left(  \langle D\widetilde{W}_{ii}, -DL^{-1} \widetilde{W}_{ii}\rangle_{\mathfrak{H}}-(m_{4}-1)\right) ^{2} \right) \nonumber  \\
  && \qquad\qquad\qquad\qquad = \E \left( \left(  \langle
     D\widetilde{W}_{ii}, -DL^{-1}
     \widetilde{W}_{ii}\rangle_{\mathfrak{H}}-\E \left( \langle D\widetilde{W}_{ii}, -DL^{-1} \widetilde{W}_{ii}\rangle_{\mathfrak{H}}\right) \right)^{2}\right)\nonumber \\
&& \qquad\qquad\qquad\qquad =\frac{1}{d ^{2} } \E \left( \left(
   \sum_{k=1}^{d} \left(F_{ikk}-\E \left(F_{ikk}\right) \right)\right)
   ^{2}\right) \nonumber \\
  &&\qquad\qquad\qquad\qquad =\frac{1}{d ^{2} } \sum_{k=1}^{d} \E\left( \left(F_{ikk}-\E\left(F_{ikk}\right)\right) ^{2}\right). \label{28i-3}
\end{eqnarray}
We claim that for every $1\leq i\leq n$ and $ 1\leq k\leq d$, 
\begin{equation*}
\E\left( \left(F_{ikk}-\E\left(F_{ikk}\right)\right) ^{2}\right) \leq C(i),
\end{equation*}
where $C(i)>0$ is a constant depending on $i$, but not on $k$. In
order to prove this, we will
make use of the Wiener chaos decomposition of $F_{ikk}$, together with
\eqref{f1} and assumption \eqref{m2m4}. From \eqref{xij1} and the
product formula \eqref{prod}, for every $1\leq i\leq n$ and $1\leq k\leq
d$, it holds that
\begin{equation*}
X_{ik}^ {2} =\sum_{r=0} ^ {q_{i}} r! \binom{q_i}{r}^ {2} I_{2q_{i}-2r}(f_{ik} \otimes _{r} f_{ik} ),
\end{equation*}
\begin{equation*}
D_{\theta}(X_{ik}^{2}-1)= \sum_{r=0} ^{q_{i}-1} r! \binom{q_i}{r}^{2} (2q_{i}- 2r) I _{2q_{i} -2r-1} (( f_{ik}\otimes _{r} f_{ik}) (\cdot, \theta) ),
\end{equation*}
and
\begin{equation*}
-D_{\theta}L^{-1}(X_{ik}^{2}-1)= \sum_{r=0} ^{q_{i}-1} r! \binom{q_i}{r}^{2}I _{2q_{i} -2r-1} ( (f_{ik}\otimes _{r} f_{ik}) (\cdot, \theta) ).
\end{equation*}
This yields, for every $1\leq i\leq n$ and $1\leq k\leq d$,
\begin{eqnarray*}
F_{ikk}&=& \langle D(X_{ik}^{2}-1), -DL^{-1}(X_{ik}^{2}-1)\rangle_{\mathfrak{H}} \\
&=& \sum_{r_{1}, r_2=0} ^{q_{i}-1} r_{1}! r_{2}!
    \binom{q_i}{r_1}^{2}\binom{q_i}{r_2}^{2}(2q_{i}-2r_{1}) \nonumber
\\
  && \qquad\qquad\qquad\qquad\qquad\qquad\langle  I _{2q_{i} -2r_{1}-1} ( f_{ik}\otimes _{r_{1}} f_{ik}  ),  I _{2q_{i} -2r_{2}-1} ( f_{ik}\otimes _{r_{2}} f_{ik} )\rangle_{\mathfrak{H}}  \\
&=&  \sum_{r_{1}, r_2=0} ^{q_{i}-1} r_{1}! r_{2}!
    \binom{q_i}{r_1}^{2}\binom{q_i}{r_2}^{2}(2q_{i}-2r_{1})\\
    && \qquad\qquad \sum_{p=0} ^{(2q_{i}-2r_{1})\wedge (2q_{i}-2r_{2})-1}
    p!\binom{2q_i-2r_1 -1}{p}\binom{2q_i-2r_2-1}{p} \\
&& \qquad\qquad\qquad\qquad I_{4q_{i}-2r_{1}-2r_{2} -2(p+1) } \left(  ( f_{ik}\otimes _{r_{1}} f_{ik}  )\otimes _{p+1} ( f_{ik}\otimes _{r_{2}} f_{ik}  )\right),
\end{eqnarray*}
and hence
\begin{eqnarray}
&&F_{ikk}- \E \left(F_{ikk}\right)\nonumber \\
&&\qquad = \sum_{r_{1},r_2=0} ^{q_{i}-1} \mathds{1}_{\left\{ r_{1}\neq r_{2}
    \right\}}r_{1}! r_{2}!
    \binom{q_i}{r_1}^{2}\binom{q_i}{r_2}^{2}(2q_{i}-2r_{1}) \nonumber \\
  && \qquad\qquad\qquad\qquad\qquad\qquad\qquad \langle  I _{2q_{i} -2r_{1}-1} ( f_{ik}\otimes _{r_{1}} f_{ik} ),  I _{2q_{i} -2r_{2}-1} ( f_{ik}\otimes _{r_{2}} f_{ik}  )\rangle_{\mathfrak{H}}  \nonumber\\
&&\qquad =  \sum_{r_{1}=0} ^{q_{i}-1} \sum_{r_{2}=0} ^{q_{i}-1} r_{1}! r_{2}! \binom{q_i}{r_1}^{2}\binom{q_i}{r_2}^{2}(2q_{i}-2r_{1})\nonumber  \\
&& \qquad\qquad \sum_{p=0} ^{(2q_{i}-2r_{1})\wedge (2q_{i}-2r_{2})-1} p!
   \binom{2q_i-2r_1 -1}{p}\binom{2q_i-2r_2-1}{p} \nonumber \\
  && \qquad\qquad\qquad\qquad\qquad\qquad I_{4q_{i}-2r_{1}-2r_{2} -2(p+1) } \left(  ( f_{ik}\otimes _{r_{1}} f_{ik}  )\otimes _{p+1} ( f_{ik}\otimes _{r_{2}} f_{ik} )\right)\nonumber\\
&&\qquad + \sum_{r=0} ^{q_{i}-1} r! ^{2} \binom{q_i}{r}^{4}
   (2q_{i}-2r)  \sum_{p=0} ^{2q_{i}-2r-2}p! \binom{2q_i-2r-1}{p} ^{2}
   \nonumber \\
  && \qquad\qquad\qquad\qquad\qquad\qquad I_{4q_{i}-4r -2(p+1) } \left(  ( f_{ik}\otimes _{r} f_{ik}  )\otimes _{p+1} ( f_{ik}\otimes _{r} f_{ik} )\right).\label{28i-11}
\end{eqnarray}
Now, using the isometry property \eqref{iso} of multiple Wiener
integrals together with the bounds
$\Vert \tilde{f} \Vert_{\mathfrak{H}^{\otimes n}} \leq
\norm{f}_{\mathfrak{H}^{\otimes n}} $ and $\Vert f\otimes _{r}
g\Vert_{\mathfrak{H}^{\otimes (2n-2r)}} \leq \Vert
f\Vert_{\mathfrak{H}^{\otimes n}} \Vert g\Vert_{\mathfrak{H}^{\otimes n}} $
for every $f, g\in \mathfrak{H} ^ {\otimes n}$ and $0\leq r\leq n$, we can write
\begin{eqnarray}
&&\E \left( I_{4q_{i}-2r_{1}-2r_{2} -2(p+1) } \left(  \left( f_{ik}\otimes _{r_{1}} f_{ik}  \right)\otimes _{p+1} \left( f_{ik}\otimes _{r_{2}} f_{ik}  \right)\right)^2\right) \nonumber\\
&& \qquad\qquad\qquad = c(q_{i}, r_{1}, r_{2}, p)
   \Vert ( f_{ik}\tilde{\otimes} _{r_{1}} f_{ik} 
   )\tilde{\otimes} _{p+1} ( f_{ik}\tilde{\otimes} _{r_{2}} f_{ik} )\Vert_{\mathfrak{H}^{\otimes (4q_{i}-2r_{1}-2r_{2} -2(p+1))}} ^{2} \nonumber \\
  && \qquad\qquad\qquad \leq c(q_{i}, r_{1}, r_{2},
     p)  \Vert   f_{ik}\tilde{\otimes} _{r_{1}}
     f_{ik}\Vert_{\mathfrak{H}^{\otimes (2q_i-2r_1)}} ^{2}\Vert   f_{ik}\tilde{\otimes} _{r_{2}} f_{ik}\Vert_{\mathfrak{H}^{\otimes (2q_i-2r_2)}} ^{2}\nonumber \\
&&\qquad\qquad\qquad \leq  c(q_{i}, r_{1}, r_{2}, p)\Vert f_{ik} \Vert_{\mathfrak{H}^{\otimes q_i}} ^{8} \nonumber
\\
  && \qquad\qquad\qquad \leq  c(q_{i}, r_{1}, r_{2}, p),\label{28i-2}
\end{eqnarray}
where $c(q_{i}, r_{1}, r_{2}, p)$ is a strictly positive constant
depending on $q_{i}, r_{1}, r_{2}, p$ but not on $k$.  Now, in
\eqref{28i-11}, we use the isometry property \eqref{iso} together with \eqref{28i-2} to obtain, for every $1\leq i\leq n$ and for every $1\leq k\leq d$,
\begin{equation*}
\E \left( \left( F_{ikk}- \E\left( F_{ikk}\right)\right) ^ {2}\right)\leq C(i),
\end{equation*}
where $C(i)>0$ is a constant (depending only on $q_{i}$). Therefore,
using the above inequality and \eqref{28i-3} yields 
\begin{equation}\label{bb1}
\E\left(\left(  \langle D\widetilde{W}_{ii}, -DL^{-1}
    \widetilde{W}_{ii}\rangle_{\mathfrak{H}}-(m_{4}-1)\right) ^{2}\right)
= \frac{1}{d ^{2} } \sum_{k=1}^{d} \E\left( (F_{ikk}-\E\left(F_{ikk}\right)) ^{2}\right) \leq \frac{C(i)}{d}. 
\end{equation}
~\\
\noindent  {\bf Step 2: calculation of $ \E\left( \left(  \langle D\widetilde{W}_{ij}, -DL^{-1} \widetilde{W}_{ij}\rangle_{\mathfrak{H}} -1 \right) ^{2}\right)$ with $i<j$}.
\\~\\
Assume $1\leq i < j\leq n$. In this case, by  \eqref{xij1}, the
product formula \eqref{prod} as well as \eqref{f1}, we have  for every $1\leq k\leq d$, 
$$X_{ik}X_{jk}= I_{q_{i}+ q_{j}} (f_{ik}\otimes f_{jk}),$$
so that $X_{ik}X_{jk}$ is an element of the $(q_{i}+ q_{j})$-th Wiener chaos. Consequently, $$-DL^ {-1} (X_{ik}X_{jk})=\frac{1}{q_{i}+q_{j}} D(X_{ik}X_{jk})$$ and 
\begin{eqnarray*}
\langle D\widetilde{W}_{ij}, -DL^{-1} \widetilde{W}_{ij}\rangle_{\mathfrak{H}}&=&\frac{1}{d \left(q_{i}+ q_{j}\right)} \sum_{k,l=1} ^{d} \langle D(X_{ik}X_{jl}), -DL^ {-1} (X_{ik}X_{jk})\rangle_{\mathfrak{H}} \\
&=&  \frac{1}{d \left(q_{i}+ q_{j}\right)}\sum_{k=1} ^{d} \Vert D(X_{ik}X_{jk})\Vert _{\mathfrak{H}} ^{2} \\
&=&\frac{1}{d \left(q_{i}+ q_{j}\right)}\sum_{k=1}^{d} \left( X_{ik}^{2} \Vert DX_{jk}\Vert_{\mathfrak{H}} ^{2} + X_{jk}^{2} \Vert DX_{ik}\Vert_{\mathfrak{H}} ^{2}\right).
\end{eqnarray*}
On the other hand, since $\E\left(\Vert DX_{ik}\Vert_{\mathfrak{H}} ^
  {2}\right)=q_{i}$ and $ \E\left( \Vert DX_{jk} \Vert_{\mathfrak{H}} ^
  {2}\right)=q_{j}$, we have 
\begin{eqnarray*}
&& \E\left( \langle D\widetilde{W}_{ij}, -DL^{-1}
   \widetilde{W}_{ij}\rangle_{\mathfrak{H}}\right) \\
  && \qquad\qquad = \frac{1}{d \left(q_{i}+ q_{j}\right)}\sum_{k=1}^{d} \left( \E\left(X_{ik}^{2}\right) \E\left(\Vert DX_{jk}\Vert_{\mathfrak{H}} ^{2}\right) + \E\left(X_{jk}^{2}\right) \E\left(\Vert DX_{ik}\Vert_{\mathfrak{H}} ^{2}\right)\right) =1
\end{eqnarray*}
and thus, writting $1=\frac{q_{i}}{q_{i}+q_{j}}+ \frac{q_{j}}{q_{i}+q_{j}}$,
 \begin{eqnarray*}
&&\left|  \langle D\widetilde{W}_{ij}, -DL^{-1} \widetilde{W}_{ij}\rangle_{\mathfrak{H}}-1\right|\\
&&\quad \leq \frac{1}{d \left(q_{i}+ q_{j}\right)}\left| \sum_{k=1}^{d} \left( X_{ik}^{2} \Vert DX_{jk}\Vert_{\mathfrak{H}} ^{2} -q_{j} \right)\right| + \frac{1}{d \left(q_{i}+ q_{j}\right)}\left| \sum_{k=1}^{d} \left( X_{jk}^{2} \Vert DX_{ik}\Vert_{\mathfrak{H}} ^{2} -q_{i} \right) \right|
\end{eqnarray*}
and
 \begin{eqnarray}
&& \E\left(\left|  \langle D\widetilde{W}_{ij}, -DL^{-1} \widetilde{W}_{ij}\rangle_{\mathfrak{H}}-1\right|^
   {2}\right) \leq  \frac{2}{d^2 \left(q_{i}+ q_{j}\right)^2} \E\left(\left| \sum_{k=1}^{d} \left( X_{ik}^{2} \Vert DX_{jk}\Vert_{\mathfrak{H}} ^{2} -q_{j} \right)\right| ^ {2}\right)\nonumber \\
&&\qquad\qquad\qquad\qquad\qquad +  \frac{2}{d^2 \left(q_{i}+ q_{j}\right)^2}\E\left(\left|
   \sum_{k=1}^{d} \left( X_{jk}^{2} \Vert DX_{ik}\Vert_{\mathfrak{H}} ^{2}
   -q_{i} \right)\right| ^ {2}\right).\label{28i-5}
\end{eqnarray}
The two summands above can be estimated in a similar way, so we only
cover the first one. By the independence of the entries and
\cite[Lemma 1]{bourguin_cramer_2011}, we have
\begin{eqnarray}
 \E\left(\left| \sum_{k=1}^{d} \left( X_{ik}^{2} \Vert
   DX_{jk}\Vert_{\mathfrak{H}} ^{2} -q_{j} \right)\right| ^ {2}\right)
   &\leq &  2 \E\left(\left| \sum_{k=1}^{d}  X_{ik}^{2} \left(\Vert
   DX_{jk}\Vert_{\mathfrak{H}} ^{2} -q_{j} \right)\right| ^ {2}\right)
   \nonumber \\
  && \qquad\qquad\qquad\quad +2 \E\left(\left| \sum_{k=1}^{d}  q_{j}\left( X_{ik}^{2} -1\right) \right| ^ {2}\right) \nonumber\\
&= & 2 \sum_{k=1}^{d}  \E\left( \left[X_{ik}^{2} \left(\Vert
     DX_{jk}\Vert_{\mathfrak{H}} ^{2} -q_{j} \right)\right] ^ {2}\right)
     \nonumber \\
   && \qquad\qquad\qquad\quad
      +2 \sum_{k=1}^{d}  q_{j}^ {2} \E\left(\left( X_{ik}^ {2}-1 \right) ^ {2}\right) \nonumber \\
&=& 2 \sum_{k=1}^{d}   \E\left(  \left(\Vert DX_{jk}\Vert_{\mathfrak{H}} ^{2} -q_{j}
    \right)^ {2}\right) m_4 \nonumber \\
  && + 2 d(m_{4}-1).\label{28i-6}
\end{eqnarray}
Writing
\begin{equation*}
\Vert D X_{jk}\Vert_{\mathfrak{H}}^{2}- q_{j}  = q_{j}^ {2} \sum_{r=0} ^
{q_{j}-2} r! \binom{q_j -1}{r}^2 I_{2q_{j}-2r-2}(f_{ik}\tilde{\otimes }_{r+1} f_{ik})
\end{equation*}
and estimating the $L^ {2}$-norm as in the proof of \eqref{28i-2} yields
\begin{equation}
\label{28i-7} \E\left( \left( \Vert D X_{jk}\Vert_{\mathfrak{H}}^{2}- q_{j} \right) ^ {2}\right) \leq  C(j),
\end{equation}
where $C(j)$ is a constant depending solely on $q_j$.
By \eqref{28i-5}, \eqref{28i-6} and \eqref{28i-7}, we get
\begin{equation}\label{bb2}
\E \left( \left(  \langle D\widetilde{W}_{ij}, -DL^{-1} \widetilde{W}_{ij}\rangle_{\mathfrak{H}} -1 \right) ^{2}\right) \leq  \frac{C(i,j)}{d},
\end{equation}
where $C(i,j)$ is a positive constant depending only on $q_i$ and $q_j$.
\\~\\
\noindent  {\bf Step 3: calculation of $ \E \left( \left(  \langle
      D\widetilde{W}_{ij}, -DL^{-1}
      \widetilde{W}_{ab}\rangle_{\mathfrak{H}}  \right) ^{2}\right)  $
  with $(i,j)\neq (a,b)$}.
\\~\\
Let $1\leq i,j, a, b\leq n$ with $i\leq j$, $a\leq b$ and $(i,j)\neq
(a,b)$. If $i,j, a,b$ are all distinct, then we have
\begin{eqnarray}
 \langle D\widetilde{W}_{ij}, -DL^{-1} \widetilde{W}_{ab}\rangle_{\mathfrak{H}} &=& \frac{1}{dq_{a}} \sum_{k,l=1}^ {d} \langle D(X_{ik}X_{jk}),  D(X_{al}X_{bl})\rangle_{\mathfrak{H}}\nonumber\\
&=&\frac{1}{dq_{a}} \sum_{k,l=1}^ {d} \langle X_{ik}DX_{jk}+
    X_{jk}DX_{ik}, X_{al}DX_{bl}+X_{bl} DX_{al}\rangle_{\mathfrak{H}} \nonumber \\
&=& \frac{1}{dq_{a}} \sum_{k,l=1}^ {d}\left( X_{ik}X_{al} \langle DX_{jk}, DX_{bl}\rangle_{\mathfrak{H}} +  X_{ik}X_{bl} \langle DX_{jk}, DX_{al}\rangle_{\mathfrak{H}} \right.\nonumber\\
&&\left.+ X_{jk}X_{al} \langle DX_{ik},
   DX_{bl}\rangle_{\mathfrak{H}} \right.\nonumber \\
  && \left. + X_{jk}X_{bl} \langle DX_{ik}, DX_{al}\rangle_{\mathfrak{H}}\right) =0,\label{28i-9}
\end{eqnarray}
since all the scalar products vanish according to Lemma \ref{ll3}.
\\~\\
The remaining cases, namely $ \langle D\widetilde{W}_{ij}, -DL^{-1}
\widetilde{W}_{ib}\rangle_{\mathfrak{H}}$ with $j\neq b$ and $ \langle
D\widetilde{W}_{ij}, -DL^{-1} \widetilde{W}_{aj}\rangle_{\mathfrak{H}}$
with $i\neq a$ can all be dealt with in a similar manner. For
instance, if $j\neq b$, assuming $i<j$ and $i<b$, we can write
\begin{eqnarray*}
\langle D\widetilde{W}_{ij}, -DL^{-1} \widetilde{W}_{ib}\rangle_{\mathfrak{H}} &=& \frac{1}{2dq_{i}}\sum_{k=1}^{d} \langle D(X_{ik}X_{jk}), D(X_{ik}X_{bk})\rangle_{\mathfrak{H}} \\
&=& \frac{1}{2dq_{i}}\sum_{k=1}^{d} X_{jk} X_{bk} \Vert DX_{ik}\Vert_{\mathfrak{H}}^{2}.
\end{eqnarray*}
Similarly, we also have
\begin{eqnarray}
\E \left(\left( \langle D\widetilde{W}_{ij}, -DL^{-1}
  \widetilde{W}_{aj}\rangle_{\mathfrak{H}} \right) ^{2}\right) = \frac{C(i)}{d^{2}} \sum_{k=1}^{d} \E\left(   \Vert DX_{ik}\Vert_{\mathfrak{H}}^{4}\right) \leq \frac{C(i)}{d},\label{bb3}
\end{eqnarray}
where the above equality and inequality are derived similarly as for
what was done for the bound appearing in \eqref{28i-7}.
\\~\\
An application of Lemma \ref{ll1} together with Theorem \ref{tt1} yields
\begin{equation*}
d_{W} (\widetilde{\W}_{n, d}, \Z_n )  \leq \sqrt{2}C \sqrt{
  \sum_{i,j,a,b=1}^ {n}\E\left( \left(  \langle D\widetilde{W}_{ij},
      -DL^{-1}\widetilde{W}_{ab} \rangle_{\mathfrak{H}} -\E\left( Z_{ij}Z_{ab}\right) \right)^ {2}\right)},
\end{equation*}
where $C>0$ is the constant appearing in Theorem \ref{tt1}. Since for $a,b,i,j$ all distinct, the corresponding above summands
vanish according to \eqref{28i-9}, we have only $n^ {4}- n(n-1)(n-2) (n-3)
\leq 6n^ {3}$ summands. By \eqref{bb1}, \eqref{bb2} and \eqref{bb3},
all these non-vanishing summands are bounded by $\frac{C}{d}$, where
$C>0$ denotes a generic constant resulting from the aggregation of the
$C(i)$ and $C(i,j)$ constants appearing in the previous steps of the proof. This
yields \eqref{28i-10} and concludes the proof. 
\end{proof}

\section{Random matrices with correlated second chaos entries}\label{sec4}

\noindent In this section, we consider the case where the entries of
the matrix $\mathcal{X}_{n,d}$ are allowed to be correlated. As in the
previous section, let $\mathcal{X}_{n,d} = \left( X_{ij} \right)_{1
  \leq i \leq n,\ 1 \leq j \leq d}$ be a $n \times d$ random matrix
whose entries are given by the increments of a Rosenblatt process,
which lives in the second Wiener chaos. The choice of dealing with the
second chaos in the case of correlated entries comes both from the
accrued importance of the second chaos in applications, as well as
from technical considerations of keeping the involved combinatorics at
a reasonable level for our exposition. The Rosenblatt process $(Z^
{H}_{t})_{t\geq 0}$ with self-similarity parameter $H\in
\left(\frac{1}{2}, 1\right)$  is defined by, for every $t \geq 0$,
\begin{equation}
\label{zh}
Z^ {H}_{t}= I_{2}(L_{t}),
\end{equation}
where $I_{2}$ denote the multiple Wiener integral of order two with
respect to a Brownian motion $(B_{t})_{t\in \mathbb{R}_{+}}$ and the
kernel $L_{t}$ is given by, for every $y_{1}, y_{2} \in \mathbb{R}$ and $t\geq 0$,
\begin{equation}
\label{L}
L_{t}(y_{1}, y_{2})= d(H) \mathds{1}_{[0,t]^2}(y_{1}, y_{2}) \int_{y_{1}\vee y_{2}}^ {t} \partial _{1} K^ {\frac{H+1}{2} }(u, y_{1})  \partial _{1} K^ {\frac{H+1}{2} }(u, y_{1})du,
\end{equation}
where
\begin{equation}
\label{dh}d(H)= \frac{1}{H+1} \sqrt{ \frac{2(2H-1)}{H}}
\end{equation}
and for $t>s$,
\begin{equation*}
K ^ {H}(t,s)= c(H) s ^ {\frac{1}{2}-H} \int_{s}^ {t} (u-s)^ { H-\frac{3}{2}} u ^ {H-\frac{1}{2}}du,
\end{equation*}
with $c(H)= \sqrt{\left( \frac{H(2H-1)}{\beta (2-2H, H-\frac{1}{2})}\right)}$, where $\beta$ denotes the beta function (see e.g. \cite{nualart_malliavin_2006}). 
\\~\\
The kernel $L_{t}$ belongs to $L^ {2}\left(\mathbb{R}^ {2}_{+}\right)$
for every $t\geq 0$. The Rosenblatt process $Z^ {H}$ is $H$-self
similar, has stationary increments and long memory. We refer to the
monographs \cite{pipiras_long-range_2017} or \cite{tudor_analysis_2013} for its basic properties. In
particular, it has the  same covariance as the fractional Brownian
motion, i.e., for any $s,t \geq 0$,
\begin{equation*}
\E( Z^ {H}_{t} Z^ {H}_{s})= \frac{1}{2}\left( t^ {2H}+ s^ {2H}-\vert t-s\vert ^ {2H}\right).
\end{equation*} 
A random variable with the same distribution as $Z^ {H}_{1}$ will be
called a {\it Rosenblatt random variable}. 
\\~\\
Let us now define the entries of the matrix $\mathcal{X}_{n,d}$. Let
$B= \left( B ^{1},\ldots, B ^{n}\right) $ denote a $d$-dimensional
Brownian motion and define
\begin{equation}
\label{zhi}
Z ^{H, i} _{t}= I^{i} _{2} (L_{t}), 
\end{equation}
where $I_{q}^{i}$ denotes the multiple Wiener integral of order $q$
with respect to the Brownian motion $B ^{i}$ for any $1 \leq i \leq
n$. Then, by \eqref{zh}, the processes $(Z ^{H,i}_{t})_{t\geq 0} $,
$1\leq i\leq n$  are independent Rosenblatt processes with the same
Hurst parameter (or self-similarity parameter) $H\in
\left(\frac{1}{2}, 1\right)$. For any $i \geq 1$, denote $f_{i}=L_{i}-L_{i-1}$. 
\\~\\
Consider the random matrix $\mathcal{X}_{n, d} = (X_{ij})_{1\leq i\leq
  n,\ 1\leq j\leq d}$ with entries given by
\begin{equation}\label{xij2}
X_{ij}= I _{2} ^{i }(f_{j})=Z ^ {H,i}_{j}- Z ^ {H,i}_{j-1} 
\end{equation}
for every $1\leq i\leq n$ and $1\leq j\leq d$, with $Z^ {H, i}$ given
by \eqref{zhi}. This means that all the entries have the same
distribution, the ones on different columns are independent and those
on the same rows are correlated according to the correlation structure
of the increments of the Rosenblatt process.  Since the covariance of
the Rosenblatt process coincides with that of the fractional Brownian
motion, the correlation structure of our matrix  is the same as in
\cite{nourdin_asymptotic_2018} (where the entries are given by the increments of the
fractional Brownian motion). Despite this fact, the non-Gaussian character will yield a different limiting behavior of the associated Wishart matrix.
\\~\\
More precisely, we have for every $1\leq 1,k\leq n$ and $1\leq j, l\leq d$,
\begin{equation*}
\E \left( X_{ij} X_{kl}\right) = \mathds{1}_{\left\{ i=k \right\}} \rho_{H} (j-l),
\end{equation*}
where $\rho_{H} $ denotes the correlation function of the Rosenblatt
process (or that of the fractional Brownian motion) given by, for $k
\in \mathbb{Z}$,
\begin{equation}\label{ro}
\rho_{H} (k)= \frac{1}{2}\left( \vert k+1\vert ^{2H}+ \vert k-1\vert ^{2H}- 2\vert k\vert ^{2H}\right).
\end{equation}
In particular, for $1\leq i\leq n$ and $1\leq j\leq d$,
\begin{equation*}
\E \left( X_{ij} ^{2}\right)= 2! \langle f_{i}, f_{j} \rangle_{L^2(\R_+^2)} =1.
\end{equation*}

\subsection{Rosenblatt limiting distribution}
Consider the Wishart matrix $\mathcal{W}_{n,d}$ obtained from
$\mathcal{X}_{n,d}$ as in \eqref{wishart}, where $\mathcal{X}_{n,d}$
is now given by \eqref{xij2}. Recall that the entries of the Wishart matrix are given by \eqref{wii} and \eqref{wij}.  We start by analyzing the asymptotic behavior in distribution of each element of the Wishart matrix. This will be related to the limiting behavior of the quadratic variations of the Rosenblatt process. Consider the constant $c_{1,H}$ given by
\begin{equation}
\label{c1h}
c_{1,H}= 4d(H),
\end{equation}
with $d(H)$ given by \eqref{dh}. Let us recall the following result from \cite{tudor_variations_2009}.
\begin{theorem}\label{tt3}
Let $(Z^ {H}_{t})_{t\geq 0}$ be a Rosenblatt process. Define, for $d\geq 1$,
\begin{equation}
\label{vd}
V_{d}= c_{1,H}^ {-1} d ^ {-H} \sum_{k=0} ^ {d-1} \left[ \frac{ \left(Z^ {H}_{\frac{k+1}{d}}-Z^ {H}_{\frac{k}{d}}\right) ^ {2}}{d^ {-2H}} -1\right]. 
\end{equation}
Then, the sequence $(V_{d})_{d\geq 1}$ converges in $L^ {2}(\Omega)$, as $d\to \infty$, to the Rosenblatt random variable $Z^ {H}_{1}$. 
\end{theorem}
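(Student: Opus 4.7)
The plan is to apply the product formula \eqref{prod} to each squared increment $X_k^2 := (Z^H_{(k+1)/d}-Z^H_{k/d})^2 = I_2(f_k)^2$, where $f_k := L_{(k+1)/d}-L_{k/d} \in L^2(\mathbb{R}_+^2)$ is symmetric, in order to isolate the components of $V_d$ in the second and fourth Wiener chaoses. Since the isometry \eqref{iso} together with the $H$-self-similarity of the Rosenblatt process yields $\|f_k\|_{L^2(\mathbb{R}_+^2)}^2 = \tfrac{1}{2}d^{-2H}$, the product formula gives
\begin{equation*}
I_2(f_k)^2 = I_4(f_k \otimes f_k) + 4\,I_2(f_k \otimes_1 f_k) + d^{-2H},
\end{equation*}
so that $X_k^2/d^{-2H} - 1 = d^{2H}\bigl[I_4(f_k \otimes f_k) + 4\,I_2(f_k \otimes_1 f_k)\bigr]$. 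Summing over $k$ and multiplying by $c_{1,H}^{-1} d^{-H}$ then produces the splitting $V_d = T_4(d) + T_2(d)$ with
\begin{equation*}
T_4(d) := c_{1,H}^{-1} d^{H} \sum_{k=0}^{d-1} I_4(f_k \otimes f_k), \qquad T_2(d) := 4\,c_{1,H}^{-1} d^{H} \sum_{k=0}^{d-1} I_2(f_k \otimes_1 f_k),
\end{equation*}
and I would establish the two convergences $T_4(d) \to 0$ and $T_2(d) \to Z^H_1$ in $L^2(\Omega)$ separately.

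For the fourth-chaos term, the isometry \eqref{iso} expresses $\mathbb{E}[T_4(d)^2]$ as a double sum over $k,l \in \{0,\ldots,d-1\}$ of inner products of $\widetilde{f_k \otimes f_k}$ and $\widetilde{f_l \otimes f_l}$, which, after symmetrization, decompose into contributions involving $\langle f_k, f_l\rangle_{L^2}^2$ and $\|f_k \otimes_1 f_l\|_{L^2}^2$. Using the identity $\langle f_k, f_l\rangle_{L^2(\mathbb{R}_+^2)} = \tfrac{1}{2} d^{-2H}\rho_H(k-l)$ obtained from self-similarity, the dominant contribution is proportional to $d^{-2H}\sum_{|m|<d}(d-|m|)\rho_H(m)^2$, which vanishes for every $H\in(\tfrac{1}{2},1)$ in view of the asymptotics $\rho_H(m) \sim H(2H-1)|m|^{2H-2}$ as $|m|\to\infty$ (the three regimes $H<3/4$, $H=3/4$ and $H>3/4$ being treated separately). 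The contraction-norm terms $\|f_k \otimes_1 f_l\|_{L^2}^2$ are handled analogously and deliver contributions of the same or smaller order.

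The main obstacle is showing $T_2(d) \to Z^H_1 = I_2(L_1)$ in $L^2(\Omega)$. Since both random variables lie in the second Wiener chaos and $f_k \otimes_1 f_k$ is already symmetric in its two free variables (as $f_k$ is), the isometry \eqref{iso} reduces the problem to the kernel convergence
\begin{equation*}
g_d := 4\,c_{1,H}^{-1}d^{H}\sum_{k=0}^{d-1} f_k \otimes_1 f_k \longrightarrow L_1 \quad \text{in } L^2(\mathbb{R}_+^2).
\end{equation*}
Using the explicit form \eqref{L} of $L_t$, the contraction $(f_k\otimes_1 f_k)(y_1,y_2)$ can be written as a double time integral of products of the form $\partial_1 K^{(H+1)/2}(v_1,y_1)\partial_1 K^{(H+1)/2}(v_2,y_2)|v_1-v_2|^{H-1}$ over $(v_1,v_2) \in [k/d,(k+1)/d]^2$. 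The normalization $c_{1,H}=4d(H)$ is precisely calibrated so that, after a change of variable $v_i\mapsto v_i/d$ which exploits the self-similarity of the kernel $K^{(H+1)/2}$, the sum $g_d$ is identified with a Riemann-sum approximation of $L_1(y_1,y_2) = d(H)\int_{y_1\vee y_2}^1 \partial_1 K^{(H+1)/2}(u,y_1)\partial_1 K^{(H+1)/2}(u,y_2)\,du$. The $L^2(\mathbb{R}_+^2)$-convergence then follows by dominated convergence with an integrable dominating function built from the singularities of $\partial_1 K^{(H+1)/2}$. Combining this identification with the vanishing of $T_4(d)$ yields $V_d \to Z^H_1$ in $L^2(\Omega)$, as claimed.
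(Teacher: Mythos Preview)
The paper does not prove this theorem; it is recalled from \cite{tudor_variations_2009}. That said, the computations carried out in the paper's Proposition~\ref{pp3} essentially reproduce the proof (with rates), so a comparison is still meaningful.

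Your chaos decomposition $V_d = T_4(d) + T_2(d)$ is exactly the one used in Proposition~\ref{pp3} (and in \cite{tudor_variations_2009}), and your treatment of $T_4(d)$ via the isometry and the asymptotics of $\rho_H$ matches the standard argument. The divergence is in how you handle $T_2(d)\to Z^H_1$. The paper (and the cited reference) expands $\E|T_{2,d}-Z^H_1|^2 = \|g_d\|^2 - 2\langle g_d,L_1\rangle + \|L_1\|^2$ and evaluates each inner product explicitly as a multiple integral of products of factors $|u-v|^{H-1}$; the three terms are then shown to cancel to leading order using the asymptotics of these integrals (Equations~\eqref{30i-3}--\eqref{30i-5}). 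This route is purely $L^2$-based, avoids any pointwise analysis of the kernels, and automatically yields the convergence rate $d^{1-2H}$.

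Your alternative is to argue that $g_d$ is a Riemann-type approximation of $L_1$ pointwise in $(y_1,y_2)$ and then invoke dominated convergence. The heuristic is sound: on each square $[k/d,(k+1)/d]^2$ the factor $|v_1-v_2|^{H-1}$ contributes a mass of order $d^{-H-1}$, and freezing $\partial_1 K^{(H+1)/2}(v_i,y_i)$ at $v_i=k/d$ produces a Riemann sum for $\int_{y_1\vee y_2}^1 \partial_1 K^{(H+1)/2}(u,y_1)\partial_1 K^{(H+1)/2}(u,y_2)\,du$. However, two points in your sketch are not routine. First, the phrase ``change of variable $v_i\mapsto v_i/d$ which exploits the self-similarity of the kernel'' is misleading: the scaling $K^{H'}(at,as)=a^{H'-1/2}K^{H'}(t,s)$ rescales \emph{both} arguments, so you cannot rescale the time variables $v_i$ while keeping $(y_1,y_2)$ fixed; the Riemann-sum identification must instead come from the smallness of the integration cell combined with the local integral of $|v_1-v_2|^{H-1}$, not from a global rescaling. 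Second, the dominated-convergence step is delicate because $\partial_1 K^{(H+1)/2}(u,y)$ blows up as $u\downarrow y$, and you need an $L^2(\mathbb{R}_+^2)$ dominating function uniform in $d$; this is where the real work lies and your sketch does not supply it. The inner-product computation in Proposition~\ref{pp3} sidesteps both issues and, as a bonus, gives the rate of convergence needed later in the paper.
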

\noindent Let us first  study the limiting behavior, as $d\to \infty$, of the diagonal terms of the Wishart matrix $\mathcal{W}_{n,d}$. 
\begin{prop}\label{pp1}
For $1\leq i\leq n$, let $W_{ii}$ be given by \eqref{wii}, and let
\begin{equation*}
\widetilde{W}_{ii}= c_{1,H} ^ {-1}d ^ {1-H} W_{ii},
\end{equation*}
where $c_{1,H}$ is the constant defined in \eqref{c1h}. Then, for every $1\leq i\leq n$,
\begin{equation*}
\widetilde{W}_{ii}\to Z ^ {H, i}_{1}
\end{equation*}
in $L ^ {2}(\Omega)$ as $d \to \infty$.
\end{prop}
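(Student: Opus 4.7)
The plan is to exploit the $H$-self-similarity of the Rosenblatt process in order to reduce the claim directly to the variation theorem stated above as Theorem~\ref{tt3}. For any fixed $d\geq 1$, the rescaled process $\tilde Z^{H,i}_t := d^{-H}Z^{H,i}_{dt}$, $t\geq 0$, is again a Rosenblatt process with the same Hurst parameter $H$, defined on the same underlying probability space. In particular, its value at time $1$, namely $\tilde Z^{H,i}_1 = d^{-H}Z^{H,i}_d$, has the Rosenblatt distribution, so it is identical in law to $Z^{H,i}_1$; this is the sense in which the limit will be identified with $Z^{H,i}_1$.

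Applying Theorem~\ref{tt3} to the Rosenblatt process $\tilde Z^{H,i}$ in place of $Z^H$ yields the $L^2(\Omega)$-convergence
\begin{equation*}
c_{1,H}^{-1}\, d^{-H} \sum_{k=0}^{d-1}\left[\,d^{2H}\bigl(\tilde Z^{H,i}_{(k+1)/d}-\tilde Z^{H,i}_{k/d}\bigr)^2-1\right] \longrightarrow \tilde Z^{H,i}_1 \quad \text{as } d\to\infty.
\end{equation*}
By the very definition of $\tilde Z^{H,i}$, one has $\tilde Z^{H,i}_{(k+1)/d}-\tilde Z^{H,i}_{k/d}=d^{-H}(Z^{H,i}_{k+1}-Z^{H,i}_k)=d^{-H}X_{i,k+1}$, so that $d^{2H}(\tilde Z^{H,i}_{(k+1)/d}-\tilde Z^{H,i}_{k/d})^2 = X_{i,k+1}^2$. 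Substituting back and recalling~\eqref{wii}, the left-hand side above equals $c_{1,H}^{-1}d^{-H}\sum_{k=1}^d(X_{ik}^2-1) = c_{1,H}^{-1}d^{1-H}W_{ii} = \widetilde W_{ii}$, which gives the conclusion.

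The real work is therefore packaged inside Theorem~\ref{tt3}. Should a self-contained proof be preferred, the plan would be to apply the product formula~\eqref{prod} and the normalization $\mathbb{E}(X_{ik}^2)=2\|f_k\|_{\mathfrak{H}^{\otimes 2}}^2=1$ to write $X_{ik}^2-1 = I_4^i(f_k\otimes f_k)+4 I_2^i(f_k\otimes_1 f_k)$, so that $\widetilde W_{ii}$ splits as a fourth-chaos term plus a second-chaos term. By orthogonality of distinct Wiener chaoses, the $L^2$-convergence would then reduce to two tasks: first, bounding the fourth-chaos $L^2$-norm by a multiple of $c_{1,H}^{-2}\,d^{-2H}\sum_{k,l=1}^d\langle f_k,f_l\rangle_{\mathfrak{H}^{\otimes 2}}^2 = \tfrac{1}{4} c_{1,H}^{-2}\,d^{-2H}\sum_{k,l=1}^d\rho_H(k-l)^2$ and checking that this vanishes in each of the three regimes $H\in(\tfrac12,\tfrac34)$, $H=\tfrac34$, $H\in(\tfrac34,1)$ (with respective rates $d^{1-2H}$, $d^{-1/2}\log d$, $d^{2H-2}$); second, showing that the symmetrized second-chaos kernel $4c_{1,H}^{-1}d^{-H}\sum_{k=1}^d f_k\otimes_1 f_k$ converges in $L^2(\mathbb{R}_+^2)$ to $L_1$. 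The second of these is the delicate step and would require contraction estimates on the kernels $f_k=L_k-L_{k-1}$ analogous to those developed in the proof of Theorem~\ref{tt3} in \cite{tudor_variations_2009}; this would be the main obstacle of the direct route, which is precisely what makes the self-similarity reduction above attractive.
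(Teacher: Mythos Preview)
Your approach is essentially the same as the paper's: both exploit the $H$-self-similarity of the Rosenblatt process to reduce the claim to Theorem~\ref{tt3}. The paper phrases it as $\widetilde{W}_{ii}\stackrel{\mathscr{D}}{=}V_d^i$ and then applies Theorem~\ref{tt3} to $V_d^i$ (built from the fixed process $Z^{H,i}$ on $[0,1]$), whereas you write $\widetilde{W}_{ii}$ as the $V_d$-functional of the rescaled process $\tilde Z^{H,i}$; these are equivalent since $\|V_d(Y)-Y_1\|_{L^2}$ depends only on the law of the process $Y$, so using a $d$-dependent copy of the Rosenblatt process is harmless. Your remark that the limit $\tilde Z^{H,i}_1=d^{-H}Z^{H,i}_d$ is only identified in law with $Z^{H,i}_1$ matches the paper's own use of $\stackrel{\mathscr{D}}{=}$, and this distributional identification is all that is needed downstream for the Wasserstein estimate.
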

\begin{proof}
By the scaling property of the Rosenblatt process and \eqref{xij2}, we
have, for every $1\leq i\leq n$, 
\begin{eqnarray*}
W_{ii}&=&\frac{1}{d} \sum_{k=1}^ {d} (X_{ik} ^ {2}-1) =\frac{1}{d}
          \sum_{k=1}^ {d} \left( \left( Z^ {H,i}_{k+1}- Z ^
          {H,i}_{k}\right)^ {2}-1\right)      \\
&\stackrel{\mathscr{D}}{=} &\frac{1}{d}\sum_{k=0} ^ {d-1} \left( \frac{ \left(Z^ {H,i}_{\frac{k+1}{d}}-Z^ {H,i}_{\frac{k}{d}}\right) ^ {2}}{d^ {-2H}} -1\right)=c_{1,H}d^ {H-1} V_{d} ^ {i},
\end{eqnarray*} 
where $\stackrel{\mathscr{D}}{=} $ denotes equality in distribution, and for $1\leq i\leq n$,
\begin{equation}\label{vdi}
V_{d} ^ {i} = c_{1,H}^ {-1} d ^ {-H} \sum_{k=0} ^ {d-1} \left( \frac{ \left(Z^ {H,i}_{\frac{k+1}{d}}-Z^ {H,i}_{\frac{k}{d}}\right) ^ {2}}{d^ {-2H}} -1\right).
\end{equation}
The conclusion follows from Theorem \ref{tt3}. 
\end{proof}
\noindent As far as the convergence of the non-diagonal terms of the Wishart matrix \eqref{wishart}, we have the following result. It shows that the square mean of the non-diagonal terms of the renormalized Wishart matrix is dominated by the square mean of the diagonal terms. Intuitively, this happens because the mean square of the  non-diagonal terms involves the increments  of two independent Rosenblatt processes.  
\begin{prop}\label{pp2}
For $1\leq i,j\leq n$ with $i\neq j$, let $W_{ij}$ be given by \eqref{wij}, and define
\begin{equation}
\label{twij}
\widetilde{W}_{ij}= c_{1,H}^{-1}d^{1-H} W_{ij},
\end{equation}
where $c_{1,H}$ denotes the constant defined in \eqref{c1h}. Then, for every $1\leq i,j\leq n$,
\begin{equation*}
\widetilde{W}_{i,j}\to 0
\end{equation*}
in $L^ {2}(\Omega)$ as $d \to \infty$, and
\begin{equation}\label{31i-1}
\E\left( \widetilde{W}_{ij} ^ {2}\right) \leq C \begin{cases} d ^
  {1-2H} & \mbox{if } H \in \left(\frac{1}{2}, \frac{3}{4}\right) \\
\log(d) d ^ {-\frac{1}{2}}& \mbox{if } H=\frac{3}{4}\\
 d ^ {2H-2}& \mbox{if } H\in \left( \frac{3}{4}, 1\right)
\end{cases},
\end{equation}
where $C>0$ denotes a generic constant.
\end{prop}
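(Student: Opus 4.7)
The plan is to compute $\E(\widetilde{W}_{ij}^2)$ explicitly and then reduce the problem to estimating the familiar sum $\sum_{k,l=1}^d \rho_H(k-l)^2$, whose asymptotic behavior is classical and controls all three regimes.

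First I would expand
\begin{equation*}
\widetilde{W}_{ij}^2 = c_{1,H}^{-2} d^{-2H}\sum_{k,l=1}^d X_{ik}X_{jk}X_{il}X_{jl}.
\end{equation*}
Since $i\neq j$, the random variables $X_{ik},X_{il}$ are built from $B^i$ while $X_{jk},X_{jl}$ are built from the independent Brownian motion $B^j$. Hence $\E(X_{ik}X_{il}X_{jk}X_{jl})=\E(X_{ik}X_{il})\E(X_{jk}X_{jl})=\rho_H(k-l)^2$ by the covariance formula for increments of the Rosenblatt process. This reduces the second moment computation to
\begin{equation*}
\E(\widetilde{W}_{ij}^2)=c_{1,H}^{-2}d^{-2H}\sum_{k,l=1}^d \rho_H(k-l)^2=c_{1,H}^{-2}d^{-2H}\sum_{|m|<d}(d-|m|)\rho_H(m)^2,
\end{equation*}
so everything boils down to a one-dimensional summation.

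Next I would use the standard asymptotics $\rho_H(m)\sim H(2H-1)|m|^{2H-2}$ as $|m|\to\infty$, which gives $\rho_H(m)^2\sim C|m|^{4H-4}$. A trivial bound $\sum_{|m|<d}(d-|m|)\rho_H(m)^2\leq d\sum_{|m|<d}\rho_H(m)^2$ splits into three cases according to the summability of the series $\sum_m |m|^{4H-4}$. When $H\in(\tfrac12,\tfrac34)$ the series converges, yielding $\sum_{k,l=1}^d \rho_H(k-l)^2\leq Cd$ and therefore $\E(\widetilde{W}_{ij}^2)\leq Cd^{1-2H}$. At the critical value $H=\tfrac34$ one obtains $\sum_{|m|<d}\rho_H(m)^2\sim C\log d$, hence $\E(\widetilde{W}_{ij}^2)\leq C\log(d)d^{-1/2}$. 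Finally, when $H\in(\tfrac34,1)$ a comparison with the integral $\int_1^d x^{4H-4}dx$ gives $\sum_{|m|<d}\rho_H(m)^2\leq Cd^{4H-3}$, whence $\E(\widetilde{W}_{ij}^2)\leq Cd^{2H-2}$. This yields the three-regime bound \eqref{31i-1}.

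Convergence of $\widetilde{W}_{ij}$ to $0$ in $L^2(\Omega)$ is then immediate: in each of the three regimes the exponent of $d$ in \eqref{31i-1} is strictly negative (noting $\tfrac12<H<1$), and the logarithmic factor at $H=\tfrac34$ does not spoil the decay. Since $\widetilde{W}_{ij}$ is centered (each summand $X_{ik}X_{jk}$ has zero mean by independence), the $L^2$ norm equals the square root of the second moment, so $\E(\widetilde{W}_{ij}^2)\to 0$ gives the claim. The only mildly delicate point is handling the three different asymptotic regimes of the correlation function carefully, especially the logarithmic correction at the critical exponent $H=\tfrac34$; otherwise the proof is a direct moment computation exploiting the row-wise independence of the Rosenblatt processes $Z^{H,i}$.
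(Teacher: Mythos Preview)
Your proposal is correct and follows essentially the same approach as the paper: compute $\E(\widetilde{W}_{ij}^2)$ directly, use the independence of $Z^{H,i}$ and $Z^{H,j}$ to factor the expectation into $\rho_H(k-l)^2$, rewrite the double sum as a single sum over lags, and invoke the asymptotics $\rho_H(m)\sim H(2H-1)|m|^{2H-2}$ to obtain the three regimes. The paper is actually terser than you are---it stops at ``the fact that $\rho_H(|k|)$ behaves as $H(2H-1)|k|^{2H-2}$ concludes the proof''---so your explicit treatment of the three cases is a welcome elaboration rather than a departure.
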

\begin{proof}
By self-similarity and \eqref{xij2},
\begin{eqnarray*}
W_{ij}& =&\frac{1}{d} \sum_{k=1}^ {d} X_{ik}X_{jk}=\frac{1}{d} \sum_{k=0}^ {d-1} \left( Z ^ {H,i}_{k+1}- Z^ {H,i}_{k}\right)  \left( Z ^ {H,j}_{k+1}- Z^ {H,j}_{k}\right),
\end{eqnarray*}
so that, for every $1\leq i,j\leq n$, 
\begin{eqnarray*}
\E\left(\widetilde{W}_{ij}^ {2}\right)&=& c_{1,H}^{-2} d^ {-2H} \E\left( \left(  \sum_{k=0}^ {d-1} \left( Z ^ {H,i}_{k+1}- Z^ {H,i}_{k}\right)  \left( Z ^ {H,j}_{k+1}- Z^ {H,j}_{k}\right)\right) ^ {2}\right) \\
&=& c_{1,H}^{-2} d^ {-2H} \sum_{k,l=0} ^{d-1} \E\left(   \left( Z ^
    {H,i}_{k+1}- Z^ {H,i}_{k}\right) \left( Z ^ {H,i}_{l+1}- Z^
    {H,i}_{l}\right)\right) \\
  && \qquad\qquad\qquad\qquad\qquad\qquad\qquad\qquad \E \left(   \left( Z ^ {H,j}_{k+1}- Z^ {H,j}_{k}\right) \left( Z ^ {H,j}_{l+1}- Z^ {H,j}_{l}\right)\right)\\
&=& c_{1,H}^{-2} d^ {-2H} \sum_{k,l=0} ^{d-1}\rho_{H}(\vert k-l\vert ) ^ {2}\\
&\leq &c_{1,H}^{-2} d ^ {1-2H} \sum_{v\in \mathbb{Z}} \rho_{H}(\vert v\vert) ^{2} \left( 1-\frac{\vert v\vert}{n}\right)1_{(\vert v\vert < n)},
\end{eqnarray*}
where $\rho_{H}$ is given by \eqref{ro}. The fact that $\rho_{H}(\vert k\vert ) $
behaves as $H(2H-1)\vert k\vert ^ {2H-2} $ as $\vert k\vert \to
\infty$ concludes the proof. 
\end{proof}

\subsection{Proof of Theorem \ref{mainresult-correlated}}

In this section, we pave the way to the proof of Theorem
\ref{mainresult-correlated} by stating and proving some preparatory
results, making use of the results established in the previous
subsection to do so. Theorem
\ref{mainresult-correlated} is restated for convenience at the end of
the section right before its proof.
\\~\\
Consider the renormalized Wishart matrix $\widetilde{\W}_{n, d}$
defined in \eqref{twij}. By Propositions \ref{pp1} and \ref{pp2}, its
limit in distribution is an $n\times n$  diagonal matrix, denoted by
$\mathcal{R}_{n}^ {H}= (R_{ij}^ {H})_{1\leq i,j\leq n}$, with
independent diagonal entries given by, for all $1\leq i\leq n$,
\begin{equation}
\label{29i-1}
R^ {H}_{ii}= Z ^ {H, i}_{1}.
\end{equation}
Given that what we need is to estimate the Wasserstein distance between
$\widetilde{\W}_{n, d}$ and $\mathcal{R}_{n}^ {H}$, we start with the
observation that, due to the scaling property of the Rosenblatt
process, we have
\begin{equation*}
d_{W}\left(\widetilde{\W}_{n,d}, \mathcal{R}_{n}^ {H}\right)=
d_{W}\left(\mathcal{V}_{n,d}, \mathcal{R}_{n}^ {H}\right),
\end{equation*}
where the matrix $\mathcal{V}_{n,d}=(V_{ij})_{1\leq i, j\leq n} $ is given by 
\begin{equation*}
  \begin{cases}
  \displaystyle   V_{ii}= V_{d}^ {i} & \mbox{for } 1 \leq i \leq n\\
  \displaystyle  V_{ij} =c_{1,H}^ {-1} d ^ {H}\sum_{k=0} ^ {d-1} \left( Z ^
      {H,i}_{\frac{k+1}{d}}-   Z ^ {H,i}_{\frac{k}{d}}\right)  \left(
      Z ^ {H,j}_{\frac{k+1}{d}}-   Z ^ {H,j}_{\frac{k}{d}}\right) &
    \mbox{for } 1 \leq i\neq j \leq n
  \end{cases},
\end{equation*}
where $V_{d}^ {i}$ was defined in \eqref{vdi}. By the definition of
the Wasserstein distance \eqref{dw}, 
\begin{equation}\label{29i-2}
d_{W}\left(\widetilde{\W}_{n,d}, \mathcal{R}_{n}^ {H}\right)=
d_{W}\left(\mathcal{V}_{n,d}, \mathcal{R}_{n}^ {H}\right) \leq \sqrt{
  \sum_{i,j=1}^ {n}\E \left( \left( V_{ij} - R^ {H}_{ij}\right) ^ {2}\right) }
\end{equation}
with $ R^ {H} _{ij}=0 $ if $i\neq j$ and $R^ {H}_{ii}$ given by \eqref{29i-1}. 
\\~\\
The estimates for the terms with $i\neq j$ in the right-hand side of
\eqref{29i-2} will follow from Proposition \ref{pp2}. The next
proposition provides estimates for the diagonal summands of the right-hand side of \eqref{29i-2}. 
\begin{prop}\label{pp3}
Let $V_{d}$ be given by \eqref{vd}. Then, it holds that
\begin{equation}\label{30i-6}
\E\left(\left| V_{d} - Z ^ {H}_{1} \right| ^ {2}\right) \leq
C \begin{cases} d ^ {1-2H} & \mbox{if } H \in\left( \frac{1}{2}, \frac{3}{4}\right)\\
\log(d) d ^ {-\frac{1}{2}} & \mbox {if } H=\frac{3}{4}\\
d ^ {2H-2} &\mbox{if } H\in \left( \frac{3}{4}, 1\right)
\end{cases},
\end{equation}
where $C >0$ denotes a generic constant.
\end{prop}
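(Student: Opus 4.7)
The plan is to chaos-decompose $V_d$ and exploit the orthogonality of distinct Wiener chaoses. Since $Z^{H}_{(k+1)/d}-Z^{H}_{k/d}=I_2(g_{k,d})$ with $g_{k,d}=L_{(k+1)/d}-L_{k/d}$, and since $\|g_{k,d}\|_{L^2(\R_+^2)}^2 = \tfrac{1}{2} d^{-2H}$ by the isometry \eqref{iso} together with the known variance of the Rosenblatt increments, the product formula \eqref{prod} yields
\begin{equation*}
\left(Z^{H}_{(k+1)/d}-Z^{H}_{k/d}\right)^2 - d^{-2H} = I_4(g_{k,d}\otimes g_{k,d}) + 4\, I_2(g_{k,d}\otimes_1 g_{k,d}).
\end{equation*}
Summing over $k$ and rescaling gives the decomposition $V_d = T_{2,d} + T_{4,d}$, where
\begin{equation*}
T_{2,d}= 4\,c_{1,H}^{-1}d^{H}\sum_{k=0}^{d-1} I_2(g_{k,d}\otimes_1 g_{k,d})\quad\text{and}\quad T_{4,d}= c_{1,H}^{-1}d^{H}\sum_{k=0}^{d-1} I_4(g_{k,d}\otimes g_{k,d})
\end{equation*}
lie in the second and fourth Wiener chaoses respectively. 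Since $Z^{H}_{1}=I_2(L_1)$ also belongs to the second chaos, \eqref{iso} gives the clean splitting
\begin{equation*}
\E\left[(V_d-Z^{H}_1)^2\right] = \E\left[(T_{2,d}-Z^{H}_1)^2\right] + \E\left[T_{4,d}^2\right],
\end{equation*}
reducing the problem to estimating the two summands on the right-hand side.

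For the \emph{second chaos piece}, the isometry \eqref{iso} turns the $L^2$ norm into a kernel comparison,
\begin{equation*}
\E\left[(T_{2,d}-Z^{H}_1)^2\right] = 2\left\| 4c_{1,H}^{-1}d^{H}\sum_{k=0}^{d-1} g_{k,d}\widetilde{\otimes}_1 g_{k,d} \,-\, L_1\right\|^2_{L^2(\R_+^2)},
\end{equation*}
which can be handled by plugging in the explicit expression \eqref{L} for $L_t$. This is essentially the computation behind Theorem~\ref{tt3} of \cite{tudor_variations_2009}; the present task is only to retain the quantitative rate rather than take a limit, and the resulting bound turns out to be dominated by the fourth chaos contribution below.

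For the \emph{fourth chaos piece}, \eqref{iso} gives
\begin{equation*}
\E\left[T_{4,d}^2\right] = 4!\, c_{1,H}^{-2}d^{2H}\sum_{k,l=0}^{d-1} \Big\langle \widetilde{g_{k,d}\otimes g_{k,d}},\, \widetilde{g_{l,d}\otimes g_{l,d}} \Big\rangle_{L^2(\R_+^4)},
\end{equation*}
and expanding the symmetrizations produces linear combinations of $\langle g_{k,d},g_{l,d}\rangle^2$ and $\|g_{k,d}\otimes_1 g_{l,d}\|^2$. The first type equals $\tfrac{1}{4}\,d^{-4H}\rho_H(|k-l|)^2$ by \eqref{iso} and the identity $\E(X_{ik}X_{il})=\rho_H(|k-l|)$, so its aggregate contribution reduces to an expression of the form $C\,d^{1-2H}\sum_{|v|<d}\rho_H(|v|)^2(1-|v|/d)$. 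Using $\rho_H(k)\sim H(2H-1)|k|^{2H-2}$ as $|k|\to\infty$, this sum is bounded for $H<3/4$, diverges logarithmically at $H=3/4$, and behaves like $d^{4H-3}$ for $H>3/4$; combined with the $d^{1-2H}$ prefactor this produces exactly the three regimes displayed in \eqref{30i-6}. The contractions $\|g_{k,d}\otimes_1 g_{l,d}\|^2$ are handled analogously: unfolding them via \eqref{L} reduces them to sums of the same $\rho_H$-type, yielding estimates of the same regime-dependent order.

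The main obstacle is the accurate bookkeeping of the fourth chaos $L^2$ norm, especially the mixed contractions $g_{k,d}\otimes_1 g_{l,d}$ for $k\neq l$, which through \eqref{L} become double integrals involving the Brownian transfer kernel $K^{(H+1)/2}$ and must be reorganized into $\rho_H$-type sums so that their contribution can be aligned with the three regimes. Once this bookkeeping is complete, summing the two chaos estimates gives \eqref{30i-6}.
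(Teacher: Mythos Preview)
Your decomposition $V_d=T_{2,d}+T_{4,d}$ via the product formula, the orthogonal splitting $\E[(V_d-Z^H_1)^2]=\E[(T_{2,d}-Z^H_1)^2]+\E[T_{4,d}^2]$, and the reduction of $\E[T_{4,d}^2]$ to $\rho_H$-type sums are exactly the paper's route.

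The one place where your emphasis is inverted relative to the paper: you work out $T_{4,d}$ in some detail (the paper simply cites \cite{tudor_variations_2009}, Equations~(3.15)--(3.17), for the three-regime bound), while dismissing the second-chaos piece as ``essentially the computation behind Theorem~\ref{tt3}'' with only the rate to be retained. In fact Theorem~\ref{tt3} gives only $L^2$ convergence with no rate, and extracting the rate is the actual new content of this proposition. The paper expands $\E|T_{2,d}|^2-2\E(T_{2,d}Z^H_1)+\E|Z^H_1|^2$ into three explicit multi-integrals over $[0,1]^4$, $[0,1]^3$, $[0,1]^2$ indexed by $i,j$, and shows via asymptotic expansions that each integrand behaves as $|i-j|^{2H-2}+o(|i-j|^{2H-2})$ with leading constants that \emph{cancel}, so that the resulting series over $k\in\mathbb{Z}$ converges and $\E|T_{2,d}-Z^H_1|^2\leq Cd^{1-2H}$ uniformly in $H\in(\tfrac12,1)$. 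Your assertion that this piece is ``dominated by the fourth chaos contribution'' is correct for $H\geq \tfrac34$ (and of the same order for $H<\tfrac34$), but establishing it requires this cancellation argument rather than a citation.
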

\begin{proof}
For $0\leq k\leq d-1$, we have
$$ Z ^ {H}_{\frac{k+1}{d}}-   Z ^ {H}_{\frac{k}{d}}= I_{2} \left(L_{\frac{k+1}{n}}- L_{\frac{k}{n}} \right),$$
where $L$ is the kernel defined in \eqref{L}. By the product formula
for multiple Wiener integrals \eqref{prod}, we can decompose $V_{d}$
as the sum of two terms, one in the fourth Wiener chaos and one in the
second Wiener chaos. Namely,
\begin{eqnarray}
V_{d}&=&c_{1,H}^ {-1} d ^ {H}\sum_{k=0} ^ {d-1}\left[ I_{4}\left( \left(  L_{\frac{k+1}{n}}- L_{\frac{k}{n}} \right)^ {\otimes 2}\right)+ 4I_{2}  \left(  \left(L_{\frac{k+1}{n}}- L_{\frac{k}{n}}\right)\otimes_{1} \left(L_{\frac{k+1}{n}}- L_{\frac{k}{n}}\right)\right)\right]\nonumber\\
&=& T_{4, d}+ T_{2,d}.\label{t2t4}
\end{eqnarray}
The estimation of the $L^ {2}(\Omega)$-norm of the term $T_{4, d}$ has
been done in \cite{tudor_variations_2009}. This term has no contribution to the limit of
$V_{d}$ and using \cite[Equations (3.15)--(3.17)]{tudor_variations_2009} yields
\begin{equation*}
\E\left( T_{4,d}^{2}\right) \leq C\begin{cases} d ^ {1-2H} & \mbox{if } H \in\left( \frac{1}{2}, \frac{3}{4}\right)\\
\log(d) d ^ {-\frac{1}{2}} & \mbox {if } H=\frac{3}{4}\\
d ^ {2H-2} & \mbox{if } H\in \left( \frac{3}{4}, 1\right)
\end{cases}.
\end{equation*} 
The summand $T_{2,d}$ appearing in \eqref{t2t4} converges in $L^
{2}(\Omega)$ to $Z^ {H}_{1}$. This has also been proved in \cite{tudor_variations_2009},
but we still need to evaluate the rate of this convergence.  We can
write $T_{2, d}= I_{2}(h_{d}) $, with 
\begin{equation}
h_{d}(y_{1}, y_{2}) =4 c_{1, H}^ {-1} d ^ {H}\sum_{k=0}^{d-1} \left(  \left(L_{\frac{k+1}{n}}- L_{\frac{k}{n}}\right)\otimes_{1} \left(L_{\frac{k+1}{n}}- L_{\frac{k}{n}}\right)\right).\label{hd}
\end{equation}
Hence,
\begin{equation}\label{30i-1}
\E\left(\left| T_{2, d}- Z ^ {H}_{1} \right| ^ {2}\right)=\E\left( \left| T_{2, d}\right| ^ {2}\right) -2\E \left(T_{2, d} Z ^ {H}_{1} \right) + \E\left(\left| Z^ {H}_{1}\right| ^ {2}\right).
\end{equation}
On one hand, \cite[Equation (3.11)]{tudor_variations_2009} yields
\begin{eqnarray*}
\E\left(\left| T_{2, d} \right| ^ {2}\right)&=&2\Vert h_{d}\Vert_{L^2(\R_{+}^2)} ^{2} \\
&=&   2c_{1,H}^ {-2} d(H)^ {4} (H( H+1)) ^ {4} d ^{2H}\sum_{i, j=0} ^{d-1}  \int_{\frac{i}{d}} ^{\frac{i+1}{d}} \int_{\frac{i}{d}} ^{\frac{i+1}{d}}  \int_{\frac{j}{d}} ^{\frac{j+1}{d}} \int_{\frac{j}{d}} ^{\frac{j+1}{d}} \vert u-v\vert ^{H-1} \\
&&\qquad\qquad\qquad\qquad \vert u'-v'\vert ^{H-1} \vert u-u'\vert ^{H-1} \vert v-v'\vert ^{H-1}dudvdu'dv' \\
&=&H(2H-1)e(H) d ^{-2H} \sum_{i, j=0} ^{d-1}  \int_{[0, 1] ^{4}}
    \vert u-v\vert ^{H-1} \vert u'-v'\vert ^{H-1}  \\
  && \qquad\qquad\qquad\qquad \vert u-u'+i-j\vert^{H-1} \vert v-v'+i-j\vert ^{H-1}dudvdu'dv',
\end{eqnarray*}
where $e(H)$ is a constant given by
\begin{equation}\label{eh}
e(H)= \frac{H^ {2}(H+1) ^ {2}}{4} .  
\end{equation}
On the other hand, using the fact that $2\Vert L_{1}\Vert_{L^2(\R_{+}^2)} ^{2}= 1$ yields
\begin{eqnarray*}
\E\left(\left| Z^ {H}_{1}\right| ^ {2}\right)&=&2\Vert L_{1}\Vert_{L^2(\R_+^2)} ^{2}\\
                                             &=& H(2H-1) \int_{0}^{1}  \int_{0} ^{1}  \vert u-v\vert ^{2H-2}dudv \\
  &=& H(2H-1) \sum_{i, j=0} ^{d-1} \int_{\frac{i}{d}} ^{\frac{i+1}{d}} \int_{\frac{j}{d}} ^{\frac{j+1}{d}}  \vert u-v\vert ^{2H-2}dudv\\
&=& H(2H-1)d ^ {-2H} \sum_{i, j=0} ^{d-1}\int_{[0, 1]^{2}}  \vert
   u-v+i-j\vert ^{2H-2}dudv \\
  &=& 1.
\end{eqnarray*}
Furthermore, note that \eqref{hd} and \eqref{L} imply
\begin{eqnarray*}
\E \left(T_{2, d} Z ^ {H}_{1} \right)&=&2\langle h_{N}, L_{1} \rangle_{L^2(\R_+^2)} \\
&=& H(2H-1)f(H)d ^{H}\sum_{i=0} ^{d-1} \int_{\frac{i}{d}}
    ^{\frac{i+1}{d}} \int_{\frac{i}{d}} ^{\frac{i+1}{d}} \int_{0}^{1}
    \vert u-v\vert ^{H-1} \vert u-u'\vert^{H-1}\\
  && \qquad\qquad\qquad\qquad\qquad\qquad\qquad\qquad\qquad\vert v-u'\vert ^{H-1}dudvdu'  \\
&=&H(2H-1)f(H) d ^{H}  \sum_{i,j=0} ^{d-1} \int_{\frac{i}{d}}
    ^{\frac{i+1}{d}} \int_{\frac{i}{d}} ^{\frac{i+1}{d}}
    \int_{\frac{j}{d}} ^{\frac{j+1}{d}} \vert u-v\vert ^{H-1} \vert
    u-u'\vert ^{H-1}\\
  &&\qquad\qquad\qquad\qquad\qquad\qquad\qquad\qquad\qquad \vert v-u'\vert ^{H-1}dudvdu'  \\
&=&H(2H-1) f(H)d ^{-2H} \sum_{i,j=0} ^{d-1}\int_{[0, 1] ^{3}} \vert
    u-v\vert ^{H-1} \vert u-u'+i-j\vert ^{H-1} \\
  &&\qquad\qquad\qquad\qquad\qquad\qquad\qquad\qquad \vert v-u'+i-j\vert ^{H-1}dudvdu',
\end{eqnarray*}
where $f(H)$ is a constant given by
\begin{equation}\label{fh}
f(H)=\frac{H+1}{2(2H-1)}.
\end{equation}
Now, \eqref{30i-1} becomes
\begin{eqnarray}
&&\E\left(\left| T_{2, d}- Z ^ {H}_{1} \right| ^ {2}\right)\nonumber \\
&&\qquad = H(2H-1) d ^ {-2H}e(H) \sum_{i,j=0}^ {d-1} \left[\int_{[0, 1] ^{4}}
    \vert u-v\vert ^{H-1} \vert u'-v'\vert ^{H-1} \vert u-u'+i-j\vert
    ^{H-1} \right. \nonumber\\
  && \left. \qquad\qquad\qquad\qquad\qquad\qquad\qquad\qquad\qquad\qquad \vert v-v'+i-j\vert ^{H-1}dudvdu'dv'\right.\nonumber \\
&&\left.\qquad\quad  -2f(H) \int_{[0, 1] ^{3}} \vert u-v\vert ^{H-1} \vert
   u-u'+i-j\vert ^{H-1}\vert v-u'+i-j\vert ^{H-1}dudvdu' \right. \nonumber\\
  && \left. \qquad\quad + \int_{[0, 1]^{2}}  \vert u-v+i-j\vert ^{2H-2}dudv\right]\nonumber \\
&&\qquad\leq  C d ^{1-2H} e(H) \sum_{k\in \mathbb{Z}} \left[\int_{[0,
   1] ^{4}}  \vert u-v\vert ^{H-1} \vert u'-v'\vert ^{H-1} \vert
   u-u'+k\vert ^{H-1} \right. \nonumber \\
  &&
     \left. \qquad\qquad\qquad\qquad\qquad\qquad\qquad\qquad\qquad\qquad\qquad \vert v-v'+k\vert ^{H-1}dudvdu'dv'\right. \nonumber  \\
&&\left. \qquad\quad -2 f(H)\int_{[0, 1] ^{3}} \vert u-v\vert ^{H-1} \vert
   u-u'+k\vert ^{H-1}\vert v-u'+k\vert ^{H-1}dudvdu' \right. \nonumber\\
  && \left. \qquad\quad + \int_{[0, 1]^{2}}  \vert u-v+k\vert ^{2H-2}dudv\right].\label{30i-2}
\end{eqnarray}
Now, \cite[Lemma 5]{tudor_variations_2009} (see also \cite[Lemma 2]{clausel_asymptotic_2014}) together with
the definition of $e(H)$ given in \eqref{eh} yields 
\begin{eqnarray}\label{30i-3}
 && \int_{[0, 1] ^{4}}  \vert u-v\vert ^{H-1} \vert u'-v'\vert
  ^{H-1} \vert u-u'+k\vert ^{H-1} \vert v-v'+k\vert ^{H-1} dudvdu'dv'
                                                  \nonumber
  \\
  &&\qquad\qquad\qquad\qquad\qquad\qquad\qquad\qquad
     = e(H)^ {-1}  k ^{2H-2} +O (k^{2H-2}).
\end{eqnarray}
Similarly,
\begin{equation}\label{30i-4}
\int_{[0, 1] ^{3}} \vert u-v\vert ^{H-1} \vert u-u'+k\vert ^{H-1}\vert
v-u'+k\vert ^{H-1}dudvdu' = f(H)^ {-1} k ^{2H-2} + o(k ^{2H-2}),
\end{equation}
where $f(H)$ is given by \eqref{fh}. Finally, \cite[Proof of
Proposition 3.1]{breton_error_2008} yields
\begin{equation}
\label{30i-5}\int_{[0, 1]^{2}}  \vert u-v+k\vert ^{2H-2}dudv = k^{2H-2} + o(k ^{2H-2}).
\end{equation}
Combining \eqref{30i-3}, \eqref{30i-4} and \eqref{30i-5} implies that
the sum over $k\in \mathbb{Z}$ in \eqref{30i-2}  converges. Hence,
\begin{equation*}
\E\left(\left| T_{2, d}- Z^ {H}_{1}\right| ^ {2}\right) \leq C d ^ {1-2H},
\end{equation*}
and since, by \eqref{t2t4},
\begin{equation*}
\E\left(\left| V_{d}- Z^ {H}_{1}\right| ^ {2}\right) =\E\left( \vert T_{4, d}\vert ^ {2}\right) + \E\left(\left| T_{2, d}- Z^ {H}_{1}\right|^ {2}\right),
\end{equation*}
we obtain \eqref{30i-6}.   
\end{proof}
We are now ready to provide the proof of Theorem
\ref{mainresult-correlated}, which we restate here
for convenience. 
\begin{customthm}{2}
Let $\widetilde{\W}_{n, d}$ be the renormalized Wishart matrix
\eqref{twij} and let $\mathcal{R}^ {H}_{n}$ be the diagonal matrix with
entries given by \eqref{29i-1}. Then, for every $n\geq 1$, the random  matrix  $\widetilde{\W}_{n, d}$ converges componentwise in distribution, as $d\to \infty$, to the matrix $\mathcal{R}^ {H}_{n}$. Moreover, as  $n,d\geq
1$, there exists a positive constant $C$ such that
\begin{equation*}
d_{W}\left( \widetilde{\W}_{n, d}, \mathcal{R}^ {H}_{n}\right) \leq
C \begin{cases} nd ^ {\frac{1}{2}-H} & \mbox{if } H \in\left( \frac{1}{2}, \frac{3}{4}\right)\\
n\sqrt{\log(d)} d ^ {-\frac{1}{4}} & \mbox {if } H=\frac{3}{4}\\
nd ^ {H-1} & \mbox{if } H\in \left( \frac{3}{4}, 1\right)
\end{cases}.
\end{equation*}
\end{customthm}
\begin{proof}[Proof of Theorem \ref{mainresult-correlated}]
The conclusion follows from combining relation \eqref{29i-2} with Propositions \ref{pp2} and \ref{pp3}.  Indeed, the summands with
$i\neq j$ in \eqref{29i-2} have been estimated in Proposition
\ref{pp2} (see \eqref{31i-1}), while the diagonal terms of
\eqref{29i-2} are estimated by \eqref{30i-6}.
\end{proof}

\bibliography{/home/solesne/Dropbox/work/research/biblio} 
\end{document}